\newlength{\dhatheight}
\newtheorem{theorem}{Theorem}[section]
\newtheorem{proposition}[theorem]{Proposition}
\newtheorem{lemma}[theorem]{Lemma}
\newtheorem{corollary}[theorem]{Corollary}
\newtheorem{definition}[theorem]{Definition}
\newcommand{\dd}{\Delta}
\newcommand{\ra}{\rightarrow}
\newcommand{\ga}{\Gamma}
\newcommand{\pp}{{\mathcal{P}}}
\newcommand{\nf}{{\mathcal{N}}}
\newcommand{\as}{{autostackable}}
\newcommand{\vkd}{van Kampen diagram}
\newcommand\ves{{\vec E_{r}}}  
\newcommand\dgd{{\vec E_d}}  
\newcommand{\ega}{e_{g,a}}
\newcommand{\stf}{\phi}
\newcommand{\ff}{\Phi}
\newcommand{\wstf}{stacking function}
\newcommand{\wff}{flow function}
\newcommand{\wisf}{induced stacking function}
\newcommand{\wiff}{induced flow function}
\newcommand{\sr}{synchronously regular}
\newcommand{\asa}{asynchronously automatic}
\newcommand{\aau}{asynchronous automaton}
\newcommand{\are}{asynchronously regular}
\newcommand{\prs}{prefix-rewriting system}
\newcommand{\cpr}{convergent prefix-rewriting system}
\newcommand{\bpr}{bounded prefix-rewriting system}
\newcommand{\bcpr}{bounded convergent prefix-rewriting system}
\newcommand{\pro}{processed}
\newcommand{\fcr}{finite convergent rewriting system}
\newcommand{\sym}{inverse-closed}
\newcommand{\pad}{\mu}
\newcommand{\ep}{\lambda}
\newcommand{\gi}{1}
\newcommand{\ew}{\lambda}
\newcommand{\qgood}{Q_a^{good}}
\begin{document}
\title[Algorithms and topology of Cayley graphs for groups]
{Algorithms and topology of Cayley graphs for groups}

\author[M.~Brittenham]{Mark Brittenham}
\address{Department of Mathematics\\
        University of Nebraska\\
         Lincoln NE 68588-0130, USA}
\email{mbrittenham2@math.unl.edu}

\author[S.~Hermiller]{Susan Hermiller}
\address{Department of Mathematics\\
        University of Nebraska\\
         Lincoln NE 68588-0130, USA}
\email{smh@math.unl.edu}

\author[D.~Holt]{Derek Holt}
\address{Mathematics Institute\\
        University of Warwick\\
         Coventry CV4 7AL, UK}
\email{D.F.Holt@warwick.ac.uk}
\thanks{2010 {\em Mathematics Subject Classification}. 
  20F65; 20F10, 68Q42}

\begin{abstract}
Autostackability for
finitely generated groups
is defined via a topological
property of the associated Cayley graph 
which can be encoded in a finite
state automaton.  
Autostackable groups have solvable word problem
and 
an effective inductive procedure for constructing
van Kampen diagrams with respect to a
canonical finite presentation.  
A comparison with 
automatic groups is given.  
Another characterization
of autostackability is given in terms of {\prs}s.
Every group which admits a finite complete rewriting
system or an \asa\ structure with
respect to a prefix-closed set of normal forms
is also \as.  As a consequence, the fundamental
group of every closed 3-manifold with any
of the eight possible uniform geometries is \as.
\end{abstract}

\maketitle


\section{Introduction}\label{sec:intro}


A primary motivation for the definition of the class of
automatic groups is to
make computing the word problem 
for 3-manifold groups tractable; however, in their
introduction of the theory of automatic groups,
Epstein, {\em et.~al.}~\cite{echlpt}
showed that the fundamental
group of a closed 3-manifold having Nil or Sol geometry is
not automatic.  Brady~\cite{brady} showed that there are
Sol geometry groups that do not belong to the
wider class of \asa\ groups.  Bridson and Gilman~\cite{bridgil}
further relaxed the language theoretic restriction
on the associated normal forms, replacing 
regular with indexed languages, and showed 
that every 3-manifold has an asynchronous
combing with respect to an indexed language.
More recently, Kharlampovich, Khoussainov, 
and Miasnikov~\cite{kkm} have defined the class of
Cayley automatic groups, extending the notion of an automatic structure
(preserving the regular language restriction), 
but it is as yet unknown whether all Nil and Sol 3-manifold
groups are Cayley automatic.  In this paper we
define the notion of autostackability for finitely generated
groups using properties very closely related to automatic
structures, that holds for 3-manifold groups of all
uniform geometries.

Let $G$ be a group with an \sym\ finite generating set $A$, and
let $\ga=\ga(G,A)$ be the associated 
Cayley graph.  Let $\vec E$ be the set of directed edges;
for each $g \in G$ and $a \in A$, let $\ega$ denote
the directed edge of $\ga$ with initial vertex $g$,
terminal vertex $ga$, and label $a$.
Let $\nf \subset A^*$ be a set of 
normal forms for $G$ over $A$;
for each $g \in G$, we denote the normal
form word representing $g$ by $y_g$.
Note that whenever we have an equality of
words $y_ga=y_{ga}$ or $y_g=y_{ga}a^{-1}$,
then there is a \vkd\ for the word 
$y_gay_{ga}^{-1}$ 
that contains no 2-cells; in this case we call
the edge $\ega$
{\em degenerate}.  
Let $\vec E_{\nf,d} = \dgd$
be the set of all degenerate directed edges,
and let $\vec E_{\nf,r} = \ves := \vec E \setminus \dgd$; we 
refer to elements of $\ves$ as {\em recursive} edges.

\begin{definition}\label{def:as}
A group $G$ with finite \sym\ generating
set $A$ is {\em autostackable}
if there are a set $\nf$ of normal forms 
  for $G$ over $A$ 
containing the empty word, 
a constant $k$, and
 a function $\stf:\nf \times A \rightarrow A^*$
such that the following hold:
\begin{enumerate} 
\item 
The graph of the function $\stf$,
$$
graph(\stf):=\{(y_g,a,\stf(y_g,a)) \mid g \in G, a \in A\},
$$ 
is a \sr\ language.
\item
For each $g \in G$ and $a \in A$, the word $\stf(y_g,a)$ 
has
length at most $k$
and represents
the element $a$ of $G$, 
and:
\begin{itemize}
\item[(2d)] If $\ega \in \vec E_{\nf,d}$, then 
the equality of words $\stf(y_g,a) = a$ holds.
\item[(2r)] 
The transitive closure $<_\stf$ of the relation $<$ on
$\vec E_{\nf,r}$, defined by 
\begin{itemize}
\item[]
$e' < \ega$ whenever $\ega,e' \in \vec E_{\nf,r}$ and $e'$ is 
on the directed path in $\ga$ 
labeled $\stf(y_g,a)$ starting at the 
vertex $g$
\end{itemize}
is a strict well-founded 
partial ordering.
\end{itemize}
\end{enumerate}
\end{definition}

Removing the algorithmic property in (1),
the group $G$ is called {\em stackable} over the \sym\ 
generating set $A$ if 
property (2) holds for some
normal form set $\nf$
(containing $\ew$),
constant $k$, and function $\stf:\nf \times A \ra A^*$. 
In~\cite{bh}, the first two authors define and study 
the class of stackable groups.  
In~\cite[Lemma~1.5]{bh} they show that stackability
implies that the finite set $R_c$ of words of
the form $\stf(y_g,a)a^{-1}$ (for $g \in G$ and $a \in A$)
is a set of defining relators for $G$, and
the set $\nf$ of normal forms is closed
under taking prefixes.  
Hence the set $\nf$ uniquely determines a
maximal tree in the Cayley graph $\ga$,
consisting of the edges that lie on paths
labeled by words in $\nf$.

This leads to a topological description 
of the concept of autostackability.
Let $T$ be a maximal tree in $\ga$.  
For each $g \in G$ and $a \in A$,
we view the two directed edges $e_{g,a}$ and $e_{ga,a^{-1}}$  of $\ga$ to
have a single underlying undirected edge in $\ga$.
Let $\vec P$ be the set of all finite
length directed edge paths in $\ga$.
A {\em flow} function
associated to $T$ is a
function $\ff:\vec E \ra \vec P$ 
satisfying the properties that: 
\begin{itemize}
\item[(a)] For each edge $e \in \vec E$,
the path $\ff(e)$ has the same initial and terminal
vertices as $e$.
\item[(b-d)] If the undirected edge underlying $e$ 
lies in the tree $T$, then $\ff(e)=e$.
\item[(b-r)]  The transitive closure 
$<_\ff$ of the relation $<$ on
$\vec E$, defined by 
\begin{itemize}
\item[]
$e' < e$ whenever $e'$ lies on the path $\ff(e)$
and the undirected edges underlying both
$e$ and $e'$ do not lie in $T$,
\end{itemize}
is a strict well-founded 
partial ordering.
\end{itemize}
That is, the map $\ff$ fixes the edges lying in the tree $T$
and describes a ``flow'' of the
non-tree edges toward the tree (or toward the basepoint).
A flow function is {\em bounded} if there is
a constant $k$ such that for all $e \in \vec E$,
the path $\ff(e)$ has length at most $k$.

For each element $g \in G$, let $y_g$ be the unique
word labeling a geodesic path in the tree $T$ from
the identity element $\gi$ of $G$ to $g$, and let
$\nf_T := \{y_g \mid g \in G\}$ be the corresponding set
of normal forms.  
Let $\beta_T:\nf_T \times A \ra \vec E$
denote the natural bijection defined by 
$\beta_T(y_g,a):=\ega$, and let $\rho:\vec P \ra A^*$
be the function that maps each directed path to
the word labeling that path in $\ga$.  
The 
composition $\rho \circ \ff \circ \beta_T:\nf \times A \ra A^*$
is part of a stackable structure for $G$ over $A$,
which we call the {\em \wisf}.
Conversely,~\cite[Lemma~1.5]{bh} implies that given a
{\wstf} $\stf:\nf \times A \ra A^*$ from a stackable structure,
there is an {\em \wiff} $\ff:\vec E \ra \vec P$, such that
$\ff(\ega)$ is the path in $\ga$ starting at the vertex $g$
labeled by the word $\stf(y_g,a)$.
Thus we have the following characterizations.

\begin{proposition}\label{prop:asflow}
Let $G$ be a group with a finite \sym\ generating set $A$.
(1) The group $G$
is stackable over $A$ if and only if the Cayley graph
$\ga(G,A)$ admits a maximal tree with an associated bounded flow function. \\
(2) The group $G$ is 
autostackable over $A$ if and only if there exists a
maximal tree in $\ga(G,A)$
with a bounded flow function such that
the graph of the \wisf\ 
is \sr.
\end{proposition}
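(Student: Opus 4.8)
The plan is to treat the two assignments sketched above---sending a stacking function to its induced flow function, and a flow function to its induced stacking function---as mutually inverse correspondences, and then to check that boundedness matches boundedness and that synchronous regularity of $graph(\stf)$ matches synchronous regularity of the induced stacking function's graph. The heart of the matter is the identification $\vec E_{\nf,d} = \{\text{directed edges of } T\}$ and $\ves = \{\text{non-tree directed edges}\}$, together with the observation that the strict well-founded partial orders in Definition~\ref{def:as}(2r) and in condition (b-r) are literally the same relation.

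For the forward direction of (1), suppose $G$ is stackable via $\stf:\nf\times A\to A^*$. By \cite[Lemma~1.5]{bh} the set $\nf$ is prefix-closed, so it determines the maximal tree $T$; since each $y_g$ is freely reduced (two distinct normal forms for a single element being forbidden) it labels the geodesic in $T$ from $\gi$ to $g$, whence $\nf=\nf_T$ and $\beta_T$ is a bijection. I would then define $\ff(\ega)$ to be the path at $g$ labeled $\stf(y_g,a)$. Property (a) holds because $\stf(y_g,a)$ represents $a$, and $\ff$ is bounded because $|\stf(y_g,a)|\le k$. A directed edge is degenerate precisely when its underlying undirected edge lies in $T$, so (b-d) is exactly (2d); and since $e'$ lies on $\ff(\ega)$ if and only if $e'$ lies on the path labeled $\stf(y_g,a)$ at $g$, condition (b-r) is exactly (2r).

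For the converse of (1), I would begin with a maximal tree $T$ and bounded flow function $\ff$, and set $\stf:=\rho\circ\ff\circ\beta_T$. Reversing the previous paragraph, property (a) yields that $\stf(y_g,a)$ represents $a$, the bound on $\ff$ yields the constant $k$, (b-d) together with $\rho(\ega)=a$ yields (2d), and (b-r) yields (2r); moreover $\ew=y_{\gi}\in\nf_T$. The two passages are mutually inverse: starting from $\stf$, building $\ff$, and recomputing $\rho\circ\ff\circ\beta_T$ returns $\stf$, since $\rho$ applied to the path labeled $\stf(y_g,a)$ is again $\stf(y_g,a)$.

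Statement (2) then follows at once. Under the correspondence of (1) the given stacking function and the induced stacking function of the associated flow function coincide, so $graph(\stf)$ is \sr\ if and only if the graph of the induced stacking function is; combined with the boundedness and well-founded-order equivalences already established, this yields the claimed characterization of autostackability. I expect the only genuine obstacle to be the two identifications flagged in the first paragraph---degenerate edges with tree edges, and the two well-founded orders---after which everything reduces to routine bookkeeping enabled by \cite[Lemma~1.5]{bh}.
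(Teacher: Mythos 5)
Your proposal is correct and follows essentially the same route as the paper, which obtains the proposition from exactly this dictionary: prefix-closure of $\nf$ via \cite[Lemma~1.5]{bh}, the identification of degenerate edges with tree edges and of the two well-founded relations, and the round-trip identity $\rho\circ\ff\circ\beta_T=\stf$ doing the work in part (2). The only wording to tighten is your justification that $y_g$ labels the tree geodesic: what is needed is that the path labeled $y_g$ is simple, which follows from prefix-closure together with injectivity of $\pi$ on $\nf$ (distinct prefixes represent distinct group elements), rather than from free reduction per se.
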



In Section~\ref{sec:bkgd} of this paper, we give 
definitions and notation, and discuss background on
normal forms, van Kampen diagrams, and 
language theory.

Section~\ref{sec:aut} contains a comparison
of the definitions 
for autostackable groups versus
automatic groups.
We contrast word problem solutions and
van Kampen diagram constructions for these
two classes of groups.
In analogy with the relationship between autostackable
and stackable groups above, removing the algorithmic
Property (i) of Definition~\ref{def:am} of automaticity
yields the definition of combable groups.
We show how to modify the proof 
of~\cite[Propositions~1.7,1.12]{bh}
to show the following.

\smallskip

\noindent{\bf Proposition~\ref{prop:solvwp}.}
{\em Autostackable groups are finitely presented,
have solvable word problem, and admit
a recursive algorithm to build a van Kampen diagram
for each word representing the identity element.}

\smallskip

The class of automatic groups is strictly contained
in the class of \asa\ groups; in
Section~\ref{sec:asa}, 
we consider 
this larger class.

\smallskip

\noindent{\bf Theorem~\ref{thm:asa}.} 
{\em Every group that has an \asa\ 
structure with a prefix-closed normal form set
is autostackable.  
}

\smallskip

We note that although 
Epstein et.~al.~\cite[Theorems~2.5.1,5.5.9]{echlpt}
have shown that
every automatic group has an automatic structure
with respect to a set of normal forms, and also
an automatic structure with respect to a
prefix-closed set of not necessarily unique
representatives, it is an open 
problem~\cite[Open~Question~2.5.20]{echlpt} whether there
must be an automatic structure on a prefix-closed
set of normal forms. 
Gilman has given other
characterizations of groups that are 
automatic with respect to a prefix-closed normal form
set in~\cite{gilman}.
Groups known to have an automatic structure 
with respect to prefix-closed normal forms include 
finite groups~\cite{echlpt}, 
virtually abelian (and hence Euclidean)
groups and word
hyperbolic groups~\cite{echlpt},
Coxeter groups~\cite{brinkhow}, 
Artin groups of finite type~\cite{charney} and of
large type~\cite{peifer},\cite{holtrees}, and
small cancellation groups satisfying conditions
$C''(p)-T(q)$ for $(p,q) \in \{(3,6),(4,4),(6,3)\}$~\cite{johnsgard}.
The class of automatic groups with respect to prefix-closed
normal forms is closed under graph 
products~\cite[Theorem~B]{hmeier}
and finite extensions~\cite[Theorem~4.1.4]{echlpt}.

\smallskip

In Section~\ref{sec:fcrs}, we give a purely
algorithmic characterization of autostackability,
using another
type of word problem solution, namely
`prefix-sensitive rewriting'.
A {\em \cpr} for a group $G$ consists of a finite
set $A$ together with a 
subset $R \subset A^* \times A^*$ such that as
a monoid, $G$ is presented by
$G=Mon\langle A \mid u=v \text{ whenever } (u,v) \in R\rangle$,
and the rewriting operations of the form
$uz \ra vz$ for all $(u,v) \in R$ and $z \in A^*$
satisfy:
\begin{itemize}
\item {\em Normal forms:} Each $g \in G$ is 
represented by exactly one {\em irreducible} word 
(i.e. word that cannot be rewritten)
over $A$.
\item {\em Termination:} There does not exist an infinite
sequence of rewritings $x \ra x_1 \ra x_2 \ra \cdots$.
\end{itemize}
A \prs\ is {\em bounded} if there exists a
constant $k$
such that for each pair $(u,v) \in R$, there  
are words $s,t,w \in A^*$ with $s$ and $t$ of length
at most $k$ such that
$u=ws$ and $v=wt$. 

\smallskip

\noindent{\bf Theorem~\ref{thm:prs}.}
{\em Let $G$ be a finitely generated group. \\
(1) The group $G$ 
is stackable if and only if $G$ admits a bounded
\cpr.\\
(2) The group $G$ 
is autostackable if and only if $G$ admits a \sr\ bounded
\cpr.
}

\smallskip 

As part of the proof of Theorem~\ref{thm:prs},
in Proposition~\ref{prop:min}, we show that given
any \sr\ \bcpr\ $R$ for $G$, there is a subset $Q'$ of $R$
that is a \sr\ \bpr\ for $G$
such that for every $(u,v) \in Q'$, every proper
prefix of $u$ is irreducible over $R$, and no
two distinct word pairs in $Q'$ have the same left hand side.

In contrast to these results, Otto~\cite[Corollary~5.3]{otto}
has shown that a group is automatic with respect to
a prefix-closed set of normal forms over a monoid
generating set $A$ if and only if there
exists a \sr\ \cpr\ such that for every $(u,v) \in R$,
the word $v$ is irreducible over $R$, and the word
$u$ is irreducible over all of the other rewriting rules of $R$.

Synchronously regular bounded {\cpr}s are a generalization
of the more widely studied concept of
finite convergent (also called complete) rewriting systems, 
which admit rewriting operations of the form
$wuz \ra wvz$ whenever
$(u,v) \in R$ and $w,z \in A^*$.
Thus Theorem~\ref{thm:prs} yields:

\smallskip

\noindent{\bf Corollary~\ref{cor:fcrs}.} {\em Every
group that admits a \fcr\ 
is autostackable.}

\smallskip

Groups known to have a \fcr\ 
include
finite groups, 
alternating knot groups~\cite{chour},
surface groups~\cite{lechen}, 
virtually abelian groups, polycyclic
groups, 
and more generally
constructible solvable groups~\cite{grovessmith}, 
Coxeter groups of large type~\cite{hermiller},
and Artin groups of finite type~\cite{hmeierartin}
(see also Le Chenadec's~\cite{lechenbook} text
for many more examples).
This class of groups is closed under
graph products~\cite{hmeier}, 
extensions~\cite{grovessmith},\cite{hmeierartin},
and certain amalgamated products and
HNN extensions~\cite{grovessmith}.

The iterated Baumslag-Solitar
groups presented by
$\langle a_0,a_1,...,a_k \mid a_0^{a_1}=a_0^2,...,
a_{k-1}^{a_k}=a_{k-1}^2\rangle$ were shown by
Gersten~\cite[Section~6]{gerstenexpid} to have
Dehn function asymptotic to a
k-fold iterated exponential function, and
also to have a \fcr\ 
(see~\cite{hmeiermeastame} for details).  
The following is then an immediate consequence of
the results above.

\begin{corollary}\label{cor:iteratedexp}
The class of autostackable groups
includes groups whose Dehn functions'
growth is asymptotically equivalent to
an iterated exponential function with
arbitrarily many iteration steps.
\end{corollary}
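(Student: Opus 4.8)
The plan is to produce, for each $k \in \N$, a single group that both realizes $k$-fold iterated exponential Dehn function growth and lies in the autostackable class, and then to let $k$ range over all of $\N$. The obvious candidates are the iterated Baumslag--Solitar groups
$$G_k = \langle a_0,a_1,...,a_k \mid a_0^{a_1}=a_0^2,..., a_{k-1}^{a_k}=a_{k-1}^2\rangle$$
recalled in the paragraph immediately preceding the statement. So no new groups need to be constructed; the entire task is to certify that a known family simultaneously has the two required properties.

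First I would invoke the two facts about this family established by Gersten~\cite{gerstenexpid} and recorded in~\cite{hmeiermeastame}: that the Dehn function of $G_k$ is asymptotically equivalent to a $k$-fold iterated exponential function, and that $G_k$ admits a \fcr. These are the substantive inputs, which I would cite rather than reprove. It is worth noting that both are properties of the abstract group $G_k$: the asymptotic growth class of the Dehn function is an invariant of the finite presentation up to the usual equivalence, and possession of a \fcr\ is asserted over some suitable finite generating set. Consequently there is no need to reconcile two different presentations, and the phrase ``the class of autostackable groups'' in the statement is to be read as autostackability over some finite generating set, which is exactly what the rewriting system supplies.

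Next I would apply Corollary~\ref{cor:fcrs}, which guarantees that any group admitting a \fcr\ is autostackable; applied to $G_k$ this places each $G_k$ in the autostackable class. Since $k$ is an arbitrary natural number and the Dehn function of $G_k$ grows like an iterated exponential with exactly $k$ iterations, the autostackable class contains groups whose Dehn functions exhibit iterated exponential growth with arbitrarily many iteration steps, as claimed. I do not expect any genuine obstacle internal to this corollary: the difficulty is discharged entirely upstream, partly in Gersten's analysis of the $G_k$ and partly in the proof of Corollary~\ref{cor:fcrs} (and hence of Theorem~\ref{thm:prs}). The only point requiring attention is confirming that the cited results apply verbatim to the displayed family, which is immediate.
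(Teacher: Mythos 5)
Your proposal is correct and follows exactly the paper's own route: the authors also take the iterated Baumslag--Solitar groups, cite Gersten (via \cite{hmeiermeastame}) for the $k$-fold iterated exponential Dehn function and the existence of a \fcr, and then apply Corollary~\ref{cor:fcrs} to conclude autostackability. There is no substantive difference between your argument and the one in the paper.
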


\noindent  This result
is in strong contrast to the quadratic upper
bound on the Dehn function for any
automatic group~\cite[Theorem~2.3.12]{echlpt}.

Miller~\cite[p.~31]{miller} has shown that there
exists a split extension of a finitely generated
free group by another finitely generated free group
that has unsolvable conjugacy problem.  Since free
groups admit finite complete rewriting systems,
the results above also give the following.

\begin{corollary}\label{cor:conjpbm}
The class of autostackable groups
includes groups with unsolvable conjugacy problem.
\end{corollary}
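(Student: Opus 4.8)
The plan is to exhibit a single group that is simultaneously autostackable and has unsolvable conjugacy problem, by combining Miller's example with Corollary~\ref{cor:fcrs}. First I would recall that Miller's group $M$ from~\cite[p.~31]{miller} is a split extension
$$1 \longrightarrow F_1 \longrightarrow M \longrightarrow F_2 \longrightarrow 1$$
in which both the kernel $F_1$ and the quotient $F_2$ are finitely generated free groups, and in which $M$ has unsolvable conjugacy problem. Thus it suffices to prove that this particular $M$ is autostackable.

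Next I would observe that every finitely generated free group admits a \fcr. Taking the generating set $A$ to consist of a free basis together with its formal inverses, the length-reducing rules $aa^{-1} \to \lambda$ and $a^{-1}a \to \lambda$ (ranging over basis elements $a$) form a finite rewriting system that is terminating, since each rewriting strictly shortens the word, and convergent, with irreducible words precisely the freely reduced words, which are in bijection with the group elements. In particular both $F_1$ and $F_2$ admit a \fcr.

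The key step is then to invoke closure of the class of groups admitting a \fcr\ under extensions, as established in~\cite{grovessmith} and~\cite{hmeierartin}. Since $F_1$ is normal in $M$ with quotient $M/F_1 \cong F_2$, and both $F_1$ and $F_2$ admit a \fcr, this closure result furnishes a \fcr\ for $M$. Applying Corollary~\ref{cor:fcrs} we conclude that $M$ is autostackable, and since $M$ has unsolvable conjugacy problem the corollary follows.

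I do not anticipate a substantive obstacle, as the argument is a direct synthesis of three previously cited facts; the only point requiring attention is to confirm that Miller's construction genuinely presents $M$ as an extension with \emph{both} free kernel and free quotient, so that the hypotheses of the extension-closure theorem are literally met. Because Miller builds his example precisely as a split extension of one finitely generated free group by another, this verification is immediate, and in particular the ambiguity in the direction of the extension is harmless here, as each of $F_1$ and $F_2$ is free regardless.
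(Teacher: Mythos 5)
Your proposal is correct and follows essentially the same route as the paper: Miller's free-by-free split extension with unsolvable conjugacy problem, the fact that free groups admit a \fcr, closure of that class under extensions via~\cite{grovessmith} and~\cite{hmeierartin}, and then Corollary~\ref{cor:fcrs}. The only difference is that you spell out the free-reduction rewriting system for free groups and the verification of the extension hypotheses, which the paper leaves implicit.
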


Finally, we return to the motivation of computing
the word problem in 3-manifold groups.
In~\cite{hs}, Hermiller and Shapiro show that
if $M$ is a closed 3-manifold with uniform
geometry that is not hyperbolic, then $\pi_1(M)$
has a \fcr.  On the
other hand, Epstein et.~al.~\cite{echlpt} show that
every word hyperbolic group, and hence every
hyperbolic 3-manifold fundamental group,
is automatic with respect to a shortlex,
and hence prefix-closed, set of normal forms.  
Hence we obtain the following.

\begin{corollary}\label{cor:3mfd}
Every fundamental group of a closed 3-manifold
with uniform geometry is autostackable.
\end{corollary}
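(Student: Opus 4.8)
The plan is to argue by a case analysis on which of the eight Thurston geometries the closed $3$-manifold $M$ carries, reducing each case to one of the two sufficient conditions for autostackability already established, namely Theorem~\ref{thm:asa} (an \asa\ structure with prefix-closed normal forms) and Corollary~\ref{cor:fcrs} (a \fcr). Since every closed $3$-manifold with uniform geometry admits exactly one of the geometries $E^3$, $S^3$, $H^3$, $S^2\times\R$, $H^2\times\R$, $\widetilde{SL_2(\R)}$, Nil, or Sol, it suffices to cover each of these, and the natural split is ``hyperbolic'' ($H^3$) versus ``everything else''.

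First I would dispose of the seven non-hyperbolic geometries. For each of $E^3$, $S^3$, $S^2\times\R$, $H^2\times\R$, $\widetilde{SL_2(\R)}$, Nil, and Sol, the theorem of Hermiller and Shapiro~\cite{hs} provides a \fcr\ for $\pi_1(M)$. Corollary~\ref{cor:fcrs} then applies directly to conclude that $\pi_1(M)$ is \as, with no further work required beyond citing these two inputs.

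The remaining case is the hyperbolic geometry $H^3$, where $\pi_1(M)$ is a word hyperbolic group. By Epstein et al.~\cite{echlpt} such a group is automatic with respect to its shortlex normal form set. I would then note two facts. First, the shortlex normal form set is prefix-closed: if a word $w$ is the shortlex-least representative of the element it represents, then so is each of its prefixes, since replacing a prefix by a strictly shortlex-smaller representative of the same group element would yield a shortlex-smaller representative of the element represented by $w$, a contradiction. Second, every (synchronously) automatic structure is in particular an \asa\ structure. Combining these, $\pi_1(M)$ has an \asa\ structure with a prefix-closed normal form set, and Theorem~\ref{thm:asa} yields that $\pi_1(M)$ is \as.

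With both cases handled, the eight geometries are exhausted and the corollary follows. The substantive content lives entirely in the cited results---the rewriting-system constructions of Hermiller--Shapiro and the automaticity of word hyperbolic groups---together with the structural theorems of this paper; the assembly is a routine case-split. Accordingly, I do not expect a genuine obstacle: the only point requiring an explicit (if short) verification is the prefix-closure of the shortlex normal forms needed to invoke Theorem~\ref{thm:asa} in the hyperbolic case.
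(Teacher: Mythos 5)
Your proof is correct and follows exactly the paper's route: the non-hyperbolic geometries are handled by the Hermiller--Shapiro finite complete rewriting systems together with Corollary~\ref{cor:fcrs}, and the hyperbolic case by the shortlex (hence prefix-closed) automatic structure of word hyperbolic groups together with Theorem~\ref{thm:asa}. Your explicit check that shortlex normal forms are prefix-closed is a detail the paper merely asserts, but otherwise the two arguments coincide.
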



\section{Notation and background} \label{sec:bkgd}


Throughout this paper, 
let $G$ be a group with a finite generating set $A$ that
is closed under inversion,
and let $\ga$ be 
the associated Cayley graph.
Let $A^*$ denote the free monoid, 
i.e.~the set of all finite words over $A$,
and let $\pi:A^* \ra G$ denote the canonical
surjection.
Whenever $u$ and $v$ lie
in the set $A^*$ of all words over $A$, 
we write $u=v$ if $u$ and $v$ are the same word,
and $u=_Gv$ if $u$ and $v$ represent the same
element of $G$; i.e., if $\pi(u)=\pi(v)$.  
Let $\gi$ denote the identity
element of $G$ and let $\ew$ denote the empty word in $A^*$;
then $\pi(\ew)=\gi$.

Given a word $w \in A^*$, let $l(w)$ denote the
length of $w$ as a word over $A$.
For each $a \in A$, the symbol $a^{-1}$
represents another element of $A$,
and so for each word $u=a_1 \cdots a_m$ in $A^*$
with each $a_i$ in $A$, there
is a formal inverse word $u^{-1} := a_m^{-1} \cdots a_1^{-1}$
in $A^*$.



\subsection{Normal forms and van Kampen diagrams} \label{sub:nfvkd}


A set $\nf$ of {\em normal forms} for $G$ over $A$
is a subset of the set $A^*$
such that the restriction of the canonical 
surjection $\pi:A^* \ra G$ 
to $\nf$ is a bijection. 
As in Section~\ref{sec:intro},
the symbol $y_g$ denotes the normal form for $g \in G$;
by slight abuse of notation, we use the symbol
$y_w$ to denote the normal form for $\pi(w)$
whenever $w \in A^*$.

Given a set $R$ of defining relators for a group $G$,
so that $\pp = \langle A \mid R \rangle$ is a presentation
for $G$, then
for an arbitrary word $w$ in $A^*$
that represents the
identity element $\gi$ of $G$, there is a {\em van Kampen
diagram} (or Dehn diagram) $\dd$ for $w$ with respect to $\pp$.  
That is, $\dd$ is a finite,
planar, contractible combinatorial 2-complex with 
edges directed and
labeled by elements of $A$, satisfying the
properties that the boundary of 
$\dd$ is an edge path labeled by the
word $w$ starting at a basepoint 
vertex $*$ and
reading counterclockwise, and every 2-cell in $\dd$
has boundary labeled (in some orientation)
by an element of $R$.  
See~\cite{bridson}
or~\cite{lyndonschupp} for more details on
the theory of van Kampen diagrams.

Let $\nf$ be a set of normal forms for 
$G$ over $A$
such that 
each word $w \in \nf$ labels a simple path.
For example, this property holds if $\nf$ is closed
under taking prefixes of words.
The ``seashell'' method to construct 
a van Kampen diagram
(with respect to the presentation $G=\langle A \mid R\rangle$)
for any word $w=b_1 \cdots b_n \in A^*$ that represents 
the identity of $G$ is as follows.
For each $i$ we denote the normal form word
$y_i:=y_{b_1 \cdots b_i}$.
Let $\Delta_i$ be a van Kampen diagram for the
word 
$y_{i-1}b_iy_i^{-1}$.
By successively gluing these diagrams along
the simple normal form paths along their boundaries,
we obtain a planar van Kampen diagram for $w$;
see Figure~\ref{fig:seashell} for an idealized picture.
\begin{figure}
\begin{center}
\includegraphics[width=3.6in,height=1.6in]{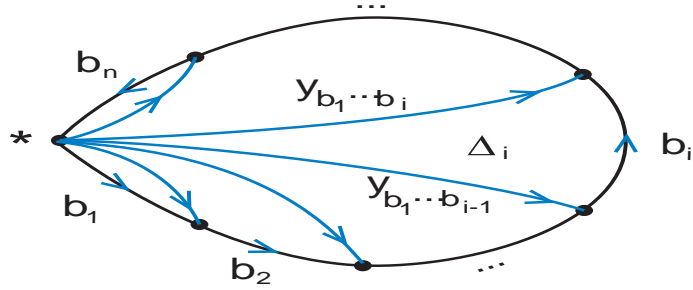}
\caption{Van Kampen diagram built with seashell method}\label{fig:seashell}
\end{center}
\end{figure}  
(See for example~\cite{echlpt},~\cite{riley}, 
or~\cite{bh} for more details.)


\subsection{Regular languages} \label{sub:reg}


For more details and proofs of the material
in this subsection, we refer the
reader to~\cite{echlpt} or~\cite{hu}.

A {\em language} over a finite set $A$ is 
a subset of the set $A^*$ of all finite words over $A$.
We also refer to subsets
of $(A^*)^n$ as languages over $A^n$.
The set $A^+$ denotes the language $A^* \setminus \{\ew\}$
of all nonempty words over $A$,
and $A^{\le k}$ denotes the finite language 
of all words over $A$ of length at most $k$.

The {\em regular} languages over $A$ are the
subsets of $A^*$ obtained from the finite subsets
of $A^*$ using finitely many operations from among
union, intersection, complement, concatenation
($S \cdot T := \{vw \mid v \in S$ and $w \in T\}$),
and Kleene star ($S^0:=\{\ew\}$, $S^n := S^{n-1} \cdot S$ and
$S^* := \cup_{n=0}^\infty S^n$).  
A {\em finite state automaton}, or FSA, is a 5-tuple
$M:=(A,Q,q_0,P,\delta)$, where
$Q$ is a finite set called the
set of {\em states}, 
$q_0 \in Q$ is the {\em initial state},
$P \subseteq Q$ is the set of {\em accept states},
and $\delta:Q \times A \ra Q$ is the
{\em transition function}.
The map $\delta$ extends to a function (often
given the same label)
$\delta:Q \times A^* \ra Q$ by recursively
defining $\delta(q,wx):=\delta(\delta(q,w),x)$
whenever $q \in Q$, $w \in A^*$, and $x \in A$.
A word $w \in A^*$
is in the language accepted by $M$ if and only if 
the state 
$\delta(q_0,w)$
lies in the set $P$.
A language $L$ over $A$ is regular if and only if 
$L$ is the language accepted by a finite state
automaton.

The class of regular
languages is closed under both image and
preimage via monoid homomorphisms
(see, for example,~\cite[Theorem~3.5]{hu}).
The class of regular sets is also closed under quotients
(see~\cite[Theorem~3.6]{hu}); we write out a special case of 
this in the following lemma for use in later sections of this
paper.

\begin{lemma}\label{lem:peel}~\cite[Theorem~3.6]{hu})
If $A$ is a finite set, $L \subseteq A^*$ is a regular
language, and $w \in A^*$,
then the quotient language
$L/w := \{x \in A^* \mid xw \in L\}$ is also a 
regular language.
\end{lemma}

Let $\$$ be a symbol not contained in $A$.
The set $A_n:=(A \cup \{\$\})^n \setminus \{(\$,...,\$)\}$
is the {\em padded $n$-tuple alphabet} derived from $A$.
For any $n$-tuple of words $u=(u_1,...,u_n) \in (A^*)^n$,
write $u_i=a_{i,1} \cdots a_{i,j_i}$ with each 
$a_{i,m} \in A$ for $1 \le i \le n$ and $1 \le m \le j_i$.
Let $M:=\max\{j_1,...,j_n\}$, and define
$\tilde u_i:=u\$^{M-j_i}$, so that each of 
$\tilde u_1$, ..., $\tilde u_n$ 
has length $M$.  That is, $\tilde u_i$ is
a word over the alphabet $(A \cup \{\$\})^*$, and
we can write $\tilde u_i = c_{i,1} \cdots c_{i,M}$
with each $c_{i,m} \in A \cup \{\$\}$.
The word $\pad(u):=(c_{1,1},...,c_{n,1}) \cdots
(c_{1,M},...,c_{n,M})$ is the {\em padded word}
over the alphabet $A_n$ induced by the $n$-tuple 
$(u_1,...,u_n)$ in $(A^*)^n$.  

A subset $L \subseteq (A^*)^n$ is called
{\em \sr} if the {\em padded extension}
set $\pad(L) := \{\pad(u) \mid u \in L\}$ of
padded words associated to the elements of $L$ is
a regular language over the alphabet $A_n$.
The class of \sr\ languages is closed under
finite unions and intersections, since the
padded extension of a union [resp.~intersection] is
the union [resp.~intersection] of the
padded extensions.
We also include two lemmas on \sr\ languages
for use in later sections.  The first lemma
says that the ``diagonal'' of a regular set
is regular.

\begin{lemma}\label{lem:diagonal}
If $L$ is a regular language over an alphabet $A$,
then the set $\Delta(L):=\{\pad(w,w) \mid w \in L\}$
is a regular language over 
the alphabet $A_2=(A \cup \$)^2 \setminus \{(\$,\$)\}$.
\end{lemma}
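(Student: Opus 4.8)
The plan is to prove that $\Delta(L) = \{\pad(w,w) \mid w \in L\}$ is regular by constructing a finite state automaton for it directly from an FSA $M = (A, Q, q_0, P, \delta)$ that accepts $L$. First I would observe a key structural feature of the padded words in $\Delta(L)$: for a word $w = a_1 \cdots a_m \in L$, the pair $(w,w)$ has both components of equal length $m$, so no padding symbol $\$$ is ever introduced. Consequently $\pad(w,w) = (a_1,a_1)(a_2,a_2)\cdots(a_m,a_m)$ is a word over the ``diagonal'' sub-alphabet $D := \{(a,a) \mid a \in A\} \subseteq A_2$, and every letter off the diagonal is forbidden in $\Delta(L)$.

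Next I would build the automaton $M' := (A_2, Q \cup \{q_f\}, q_0, P, \delta')$ over the padded alphabet $A_2$, where $q_f$ is a single new non-accepting ``fail'' (dead) state. The transition function $\delta'$ would be defined on a letter $(b,c) \in A_2$ by setting $\delta'(q, (a,a)) := \delta(q,a)$ for each diagonal letter $(a,a) \in D$ and each $q \in Q$, and sending every other transition --- that is, every off-diagonal letter $(b,c)$ with $b \neq c$ (including letters involving $\$$), and every transition out of $q_f$ --- into the fail state $q_f$. One then checks that $M'$ reaches an accept state on input $z$ \ifof $z$ lies entirely in $D^*$ and the corresponding $A$-word is accepted by $M$; this is exactly the condition that $z = \pad(w,w)$ for some $w \in L$. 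Hence $\Delta(L)$ is accepted by $M'$ and is therefore regular over $A_2$, as claimed.

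The verification splits naturally into the observation that any input containing an off-diagonal letter drives $M'$ irrevocably to $q_f$ and so is rejected, together with the fact that on purely diagonal inputs the machine $M'$ simply simulates $M$ under the obvious bijection $(a,a) \leftrightarrow a$ between $D$ and $A$. Since the image of this bijection extends to a bijection $D^* \leftrightarrow A^*$ preserving acceptance, the accepted language of $M'$ is precisely $\{\pad(w,w) \mid w \in L\} = \Delta(L)$.

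The only mild subtlety --- and the closest thing to an obstacle --- is bookkeeping around the padding alphabet: one must confirm that no legitimate element of $\Delta(L)$ ever uses a $\$$ (which holds because both coordinates have equal length), and that the fail state correctly absorbs all the off-diagonal and padding transitions so that $M'$ remains a well-defined complete FSA. Given the equal-length observation this is routine, so the proof is essentially the explicit construction above. Alternatively, one could derive the result more abstractly from the closure of regular languages under preimages of monoid homomorphisms (via the map $A \to A_2$, $a \mapsto (a,a)$) cited earlier in the excerpt, but the direct automaton construction is the most transparent route.
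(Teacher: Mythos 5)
Your proof is correct, but it takes a different (and more hands-on) route than the paper. The paper's proof is a one-line substitution argument: take a regular expression for $L$ and replace each letter $a$ by $(a,a) \in A_2$; equivalently, $\Delta(L)$ is the image of $L$ under the injective monoid homomorphism $A^* \ra A_2^*$, $a \mapsto (a,a)$, and regular languages are closed under homomorphic image (as recorded in Section~\ref{sub:reg}). You instead build an explicit complete FSA over $A_2$ that simulates an FSA for $L$ on the diagonal letters $(a,a)$ and routes every off-diagonal or padded letter to an absorbing dead state; your preliminary observation that $\pad(w,w)$ never contains the symbol $\$$ is exactly the point that makes this work, and your verification (diagonal inputs simulate $M$ under the bijection $(a,a) \leftrightarrow a$; anything else is trapped and rejected) is sound, including the empty-word case. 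The explicit construction buys concreteness and sidesteps any worry about how letter-substitution interacts with the complement operation in a regular expression, at the cost of a little bookkeeping; the paper's approach buys brevity by leaning on closure properties already quoted. One small slip in your closing remark: the abstract alternative should invoke closure under homomorphic \emph{image} (of $L$ under $a \mapsto (a,a)$), not preimage --- the preimage under that homomorphism goes from languages over $A_2$ to languages over $A$, the wrong direction, unless you first intersect with the diagonal sub-alphabet's Kleene star and project. This does not affect your main argument.
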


\begin{proof}  Given an expression of the regular language
$L$ using letters of $A$ together with the operations
$\cup,\cap,(~)^c,\cdot,(~)^*$, replace every instance
of a letter $a \in A$ with the letter $(a,a) \in A_2$.
\end{proof}

\begin{lemma}\label{lem:product}
If $L_1,...,L_n$ are regular languages over $A$,
then their Cartesian product 
$L_1 \times \cdots \times L_n \subseteq (A^*)^n$
is \sr.
\end{lemma}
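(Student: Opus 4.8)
The plan is to construct, directly from finite state automata for the $L_i$, a single finite state automaton over the padded alphabet $A_n$ that accepts exactly $\pad(L_1 \times \cdots \times L_n)$; regularity of this set is by definition synchronous regularity of $L_1 \times \cdots \times L_n$. For each $i$ fix an FSA $M_i = (A, Q_i, q_{0,i}, P_i, \delta_i)$ accepting $L_i$. The first step is to promote each $M_i$ to an automaton $M_i'$ over the enlarged alphabet $A \cup \{\$\}$ that recognizes the language $L_i \cdot \$^*$ of words consisting of a word of $L_i$ followed by a (possibly empty) block of padding symbols. Concretely, I adjoin two new states, a ``padding'' state $d_i$ and a ``dead'' state $z_i$; transitions on letters of $A$ from states of $Q_i$ agree with $\delta_i$, a $\$$ read from an accept state of $M_i$ (or from $d_i$) moves to $d_i$, and every other new transition (a $\$$ from a non-accept state of $Q_i$, a letter of $A$ read from $d_i$, or anything from $z_i$) moves to $z_i$; the accept states are $P_i \cup \{d_i\}$. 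One checks directly that $M_i'$ accepts $w \in (A \cup \{\$\})^*$ precisely when $w = u\$^j$ with $u \in L_i$ and $j \ge 0$.

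Next I form the product automaton $M''$ over the alphabet $A_n$, with state set $\prod_{i=1}^n (Q_i \cup \{d_i, z_i\})$, initial state $(q_{0,1}, \dots, q_{0,n})$, accept set $\prod_{i=1}^n (P_i \cup \{d_i\})$, and transition function that reads a letter $(c_1, \dots, c_n) \in A_n$ by applying, in the $i$th factor, the transition function of $M_i'$ to $c_i$. By the product construction, a word $W$ over $A_n$ of length $M$ is accepted by $M''$ if and only if, for every $i$, the $i$th coordinate projection of $W$ is accepted by $M_i'$, i.e. has the form $u_i \$^{M - l(u_i)}$ with $u_i \in L_i$.

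It remains to verify that the accepted language is exactly $\pad(L_1 \times \cdots \times L_n)$, and this is the one step that needs genuine care. If $(u_1,\dots,u_n) \in L_1 \times \cdots \times L_n$ then $\pad(u_1,\dots,u_n)$ projects in coordinate $i$ to $u_i \$^{M - l(u_i)}$ with $M = \max_i l(u_i)$, so each $M_i'$ accepts and hence $M''$ accepts. Conversely, suppose $M''$ accepts a word $W$ of length $M$; then each projection is $u_i \$^{M - l(u_i)}$ with $u_i \in L_i$, so $M \ge l(u_i)$ for all $i$. The point is to rule out $M > \max_i l(u_i)$: were that so, the final column of $W$ would be $\$$ in every coordinate, contradicting the fact that $A_n$ excludes the all-$\$$ letter $(\$,\dots,\$)$. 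Hence $M = \max_i l(u_i)$ and $W = \pad(u_1,\dots,u_n)$, so the accepted language is $\pad(L_1 \times \cdots \times L_n)$, which is therefore regular. The main obstacle is precisely this matching of the common projected length to the maximal unpadded length: the suffix form of the padding in each coordinate is forced by the dead-state transitions of $M_i'$, while the exclusion of the all-$\$$ column in $A_n$ supplies exactly the constraint that pins down $M$. (Alternatively, one can write $L_1 \times \cdots \times L_n = \bigcap_{i=1}^n (A^* \times \cdots \times L_i \times \cdots \times A^*)$ and invoke closure of synchronously regular sets under intersection, but this merely relocates the same padding argument to the single-coordinate case, so the direct product construction is the cleaner route.)
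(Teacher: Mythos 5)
Your proof is correct and is essentially the paper's argument: the paper writes $\pad(L_1\times\cdots\times L_n)=\bigcap_{i=1}^n\rho_i^{-1}(L_i\$^*)$ and invokes closure of regular languages under homomorphic preimage and finite intersection, which is exactly your product automaton running each $M_i'$ on the $i$th coordinate. Your explicit check that the excluded all-$\$$ letter pins the common length to $\max_i l(u_i)$ is the (implicit) justification of the paper's set identity, so the two proofs coincide in substance.
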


\begin{proof}
For each $1 \le i \le n$ define the monoid
homomorphism $\rho_i:A_n^* \ra (A \cup \$)^*$ by
$\rho_i(a_1,...,a_n):=a_i$.  Then the padded
extension of the product language 
$L:=L_1 \times \cdots \times L_n$
satisfies
$\pad(L) = \cap_{i=1}^n \rho_i^{-1}(L_i\$^*)$.
Since each language $L_i\$^*$ is regular, and
regular languages are closed under homomorphic preimage
and finite intersection, then $\pad(L)$ is regular.
\end{proof}

A (deterministic) 
{\em asynchronous (two tape) automaton} over $A$
is a finite state automaton $M=(A \cup \{\#\},Q,q_0,P,\delta)$
satisfying:
(1) The state set $Q$ is a disjoint union
$Q=Q_1 \cup Q_1^\# \cup Q_2 \cup Q_2^\# \cup \{q_f\} \cup \{F\}$
of six subsets, the initial state $q_0$ lies in
$Q_1 \cup Q_2$, and the set of accept states is $P=\{q_f\}$.
(2) The transition function $\delta : Q \times (A \cup \{\#\}) \ra Q$ 
satisfies
$\delta(q,a) \in Q_1 \cup Q_2 \cup \{F\}$ if $q \in Q_1 \cup Q_2$
and $a \in A$;
$\delta(q,a) \in Q_1^\# \cup \{F\}$ if either 
($q \in Q_2$ and $a = \#$) or ($q \in Q_1^\#$ and $a \in A$);
$\delta(q,a) \in Q_2^\# \cup \{F\}$ if either 
($q \in Q_1$ and $a = \#$) or ($q \in Q_2^\#$ and $a \in A$);
$\delta(q,a) \in \{q_f,F\}$ if $q \in Q_1^\# \cup Q_2^\#$
and $a = \#$; and $\delta(q,a)=F$ if 
$q=F$ and $a \in A \cup \{\#\}$.
As before, extend $\delta$ to a function
$\delta:Q \times (A \cup \{\#\})^* \ra Q$ recursively
by $\delta(q,wa):=\delta(\delta(q,w),a)$.

This finite state automaton is viewed as
reading from two tapes rather than one, by the
interpretation that the words on each tape
are to have an ending symbol $\#$ appended, and
when the automaton $M$ is in a state in
$Q_i \cup Q_i^\#$, then $M$ will read the next symbol
from tape $i$.  Then the automaton
 is in a state of $Q_i^\#$ after $M$
 has finished reading the word on
the other tape.

More precisely, given 
a pair of words $(u,v) \in ((A \cup \{\#\})^*)^2$,
a {\em shuffle} of $(u,v)$ is a word 
$u_1v_1 \cdots u_jv_j \in (A \cup \{\#\})^*$ such that 
each $u_i,v_i \in (A \cup \{\#\})^*$,
$u=u_1 \cdots u_j$, and $v=v_1 \cdots v_j$.
Let $(u,v) \in ((A \cup \{\#\})^*)^2$, and
write $u=a_{1,1} \cdots a_{1,m_1}$ and $v=a_{2,1} \cdots a_{2,m_2}$ where
each $a_{i,j}  \in A \cup \{\#\}$.
Given a state $q \in Q$ and the pair $(u,v)$,
there is a unique word 
$\sigma_{M,q}(u,v) := c_1 \cdots c_{m+n}\in (A \cup \{\#,F\})^*$
defined recursively, 
such that 
$c_1:=a_{i,1}$ if $q \in Q_i \cup Q_i^\#$ 
    (and $1 \le m_i$)
and $c_1:=F$ if $q \in \{q_f,F\}$, and
whenever $k \le m+n-1$,
if $c_1 \cdots c_k$ is a shuffle of
$(a_{1,1} \cdots a_{1,k_1},a_{2,1} \cdots a_{2,k_2})$
with $\delta(q,c_1 \cdots c_k)=q'$, then
$c_{k+1}:=a_{i,k_i+1}$ if $q' \in Q_i \cup Q_i^\#$ 
    (and $k_i<m_i$)
and $c_{k+1}:=F$ if $q' \in \{q_f,F\}$; and if $c_k=F$ then
$c_{k+1}:=F$.

A pair $(u,v) \in (A^*)^2$ is accepted by
the \aau\ $M$ if and only if  
$\sigma_{M,q_0}(u\#,v\#)$ is a shuffle of $(u\#,v\#)$;
i.e., there is no occurrence of the letter $F$.
(Equivalently, $(u_1,u_2)$ is in the language of $M$
if and only if the machine $M$ reads
the next letter from the $u_i\#$ tape whenever
$M$ is in a state of $Q_i \cup Q_i^\#$, 
$M$ starts in state $q_0$, and
$M$ ends in state $q_f$ when both tapes have been
read.)
A subset of $A^* \times A^*$ is an {\em \are} 
language if it is the set of word pairs accepted
by an \aau.

Again we include a closure property for
\are\ languages for later use.  This result
is proved by Rabin and Scott in~\cite[Theorem~16]{rabinscott}.

\begin{lemma}~\cite{rabinscott}\label{lem:proj}
If $L \subset (A^*)^2$ is an
\are\ language, then the projection on the first
coordinate given by the set 
$\rho_1(L):=\{u \mid \exists (u,v) \in L\}$
is a regular language over $A$.
\end{lemma}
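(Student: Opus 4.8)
The plan is to run the standard Rabin--Scott projection argument: I would build a nondeterministic finite automaton $N$ over $A$, allowing transitions that read the empty word $\ew$, which simulates the \aau\ $M=(A \cup \{\#\},Q,q_0,P,\delta)$ accepting $L$ by reading $u$ off the first tape while \emph{guessing} the second coordinate $v$ one letter at a time. Since every language accepted by a nondeterministic automaton with $\ew$-transitions is regular, exhibiting such an $N$ with $L(N)=\rho_1(L)$ completes the proof.

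First I would take the states, initial state, and dead state of $N$ to be those of $M$, with $q_f$ as the unique accept state. The transitions are read off from $\delta$ according to which tape $M$ is scanning. For a state $q \in Q_1 \cup Q_1^\#$, where $M$ reads tape $1$, and a letter $a \in A$, I put a genuine input-consuming transition $q \stackrel{a}{\ra} \delta(q,a)$ in $N$, which consumes the letter $a$ of $u$. For a state $q \in Q_2 \cup Q_2^\#$, where $M$ reads tape $2$, and any $b \in A \cup \{\#\}$, I put an $\ew$-transition $q \ra \delta(q,b)$, guessing the next symbol of $v$, or the end of $v$, without consuming any input. Finally, the end marker on tape $1$ is handled by adjoining, for each $q \in Q_1 \cup Q_1^\#$, an $\ew$-transition $q \ra \delta(q,\#)$ that commits to $u$ having been read in full.

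The core of the argument is then to verify that $L(N)=\rho_1(L)$. Since $\delta$ is a genuine function, for each fixed guess of the second-tape word $v$ the computation of $M$ on $(u\#,v\#)$ is uniquely determined; hence an accepting run of $N$ on input $u$ is exactly a run in which the tape-$2$ and end-marker transitions (all $\ew$-moves) spell out some $v\#$ and the tape-$1$ transitions consume $u$, ending in $q_f$. This is precisely an accepting computation of $M$ on $(u,v)$, that is, a witness that $(u,v) \in L$; and conversely every pair $(u,v) \in L$ yields such a run. I expect the only point requiring real care to be the bookkeeping of the two end markers: I must check that taking the tape-$1$ $\ew$-transition $q \ra \delta(q,\#)$ \emph{before} all of $u$ has been consumed leaves $N$ in a state of $Q_2^\# \cup \{q_f,F\}$, out of which there are no input-consuming transitions, so such a branch can never finish reading $u$ and therefore contributes nothing spurious to $L(N)$. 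Once this is verified, the correspondence between accepting runs of $N$ and elements of $L$ is exact, and $\rho_1(L)$ is regular. The construction itself is routine; this end-marker bookkeeping is the only obstacle I anticipate.
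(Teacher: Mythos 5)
Your construction is correct, and it is essentially the argument the paper relies on: the published text simply cites Rabin--Scott for this lemma, and the standard proof (a draft of which appears in the source) is exactly your simulation of the \aau\ by a one-tape automaton with $\ew$-moves that consumes letters of $u$ on tape-$1$ states and nondeterministically guesses the letters of $v\#$ (and the tape-$1$ end marker) on the remaining states. Your observation that a premature tape-$1$ end-marker guess strands the run in $Q_2^\# \cup \{q_f,F\}$, from which no further input can be consumed, is the right way to rule out spurious acceptances, so nothing is missing.
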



\section{Autostackable versus automatic:  
Word problems and van Kampen diagrams}\label{sec:aut}


We give a definition of automatic structures 
for groups that is equivalent to, but differs from, the
original definition in~\cite{echlpt}, in order
to illustrate more completely the close connection
to Definition~\ref{def:as} of autostackable 
structures above.  Both automaticity and autostackability
utilize the concepts of a set $\nf$ of normal forms for a group
$G$ over a generating set $A$, but in contrast
to the \wstf\ $\stf$ for autostackability which has
a finite image set, the
definition of automaticity relies on the 
{\em normal form map} 
{nf}$_\nf: \nf \times A \ra A^*$ defined by
{nf}$_\nf(y_g,a) := y_{ga}$.

\begin{definition}\label{def:am}
A group $G$ with finite \sym\  generating
set $A$ is {\em automatic} if
there are a set $\nf$ of normal forms 
for $G$ over $A$ and a constant $k$ such that the following hold:
\begin{itemize} 
\item[(i)]
The graph of the function {\em nf}$_\nf: \nf \times A \ra A^*$, 
$$
graph(\text{\em nf}_\nf):=\{(y_g,a,y_{ga}) \mid g \in G, a \in A\},
$$ 
is a \sr\ language.
\item[(ii)] For each $g \in G$ and $a \in A$,
the pair of paths in $\ga$  
labeled $y_g$ and $y_{ga}$
beginning at the identity vertex $\gi$ 
and ending at the endpoints of $\ega$ must
{\em $k$-fellow travel}; that is, for any natural number $i$,
if $w$ and $w'$ are the length $i$ prefixes of the words
$y_g$ and $y_{ga}$, then there must be a path in $\ga$
of length at most $k$ between the 
vertices of $\ga$ labeled by $w$ and $w'$.
\end{itemize}
\end{definition}

In fact, the definition of automaticity given 
in~\cite[Defn.~2.3.1,Thm.~2.5.1]{echlpt} requires
only property (i) above;
indeed,
in~\cite[Thm.~2.3.5,Thm.~3.3.4]{echlpt}  
Epstein et.~al.~show 
that the geometric
property (ii) follows from the algorithmic property (i). 
Moreover, it is immediate from the properties
of regular languages discussed in Section~\ref{sub:reg} 
that the set $graph($nf$_\nf)$ is a \sr\ language
if and only if
the sets $L_a:=\{(y_g,y_{ga}) \mid g \in G\} \subset (A^*)^2$
are \sr\ for each $a \in A \cup \{\ew\}$,
giving the equivalence of property (i)
above with the definition in~\cite{echlpt}.

Comparing Definitions~\ref{def:as} and~\ref{def:am},
the automatic property (i) requires a finite state automaton that can
recognize the tuple $(y_g,a,z)$ where the third coordinate
is the normal form $z=y_{ga}$, but the
autostackable property (1) requires only a FSA that
recognizes such a tuple in which $z$ is a bounded length
word giving information toward eventually finding the
normal form $y_{ga}$.  (We make this more precise below.)
In analogy with the autostackable property (2)
of Definition~\ref{def:as}, the automatic group 
property (ii) naturally divides into degenerate
and recursive cases, 
in that if the
directed edge $\ega$ is degenerate, we have
the stronger property that the paths $y_g,y_{ga}$
1-fellow travel.

Analogous to the relationship between autostackable
and stackable groups, removing the algorithmic property (i),
a group $G$ is called {\em combable} 
over $A$ if the geometric
property (ii) of Definition~\ref{def:am} holds
for some set $\nf$ of normal forms and some constant $k$.
Note that combability, and hence also automaticity,
imply finite presentability; in particular, the set $R$ of
all words of length up to $2k+2$ that represent
the identity are a set of defining relators for the group.

If $G$ is a combable group satisfying the further 
property that 
the words of the normal form set $\nf$
label simple
paths in the Cayley graph $\ga$, for example in the
case that $\nf$ is closed under taking prefixes, then
the ``seashell'' method discussed in Section~\ref{sub:nfvkd}
extends to the following procedure to construct
a van Kampen diagram
(with respect to the presentation induced by the combable structure)
for any word that represents 
the identity of $G$.
Given a word $w=b_1 \cdots b_n$ representing the
identity of $G$,
with each $b_i \in A$, 
let $y_i:=y_{b_1 \cdots b_i}$ for each $i$.
Property (ii) shows that for each $i$
there is a van Kampen diagram $\Delta_i$ labeled by 
$y_{i-1}b_iy_i^{-1}$
that is ``$k$-thin'' as illustrated in Figure~\ref{fig:ft}.
\begin{figure}
\begin{center}
\includegraphics[width=4.0in,height=0.8in]{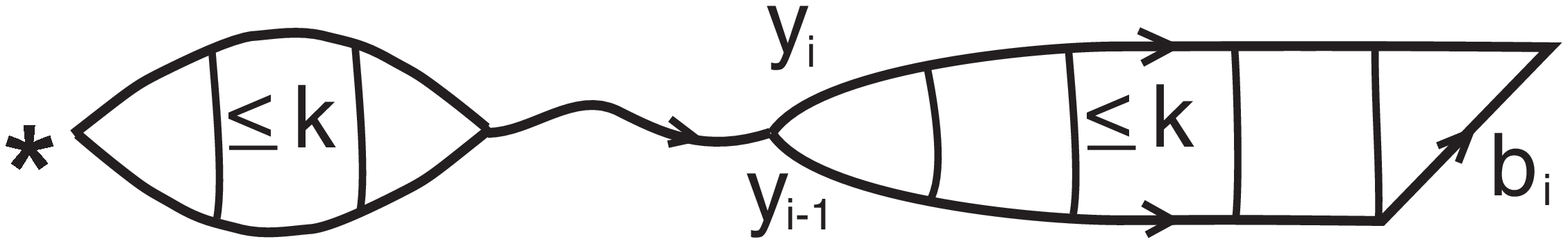}
\caption{``$k$-thin'' van Kampen diagram}\label{fig:ft}
\end{center}
\end{figure}
Gluing these $k$-thin diagrams along
their $y_i$ boundaries results in 
a planar van Kampen diagram for $w$;
see Figure~\ref{fig:seashell}.
In the case that the group is automatic,
this yields a solution of the word problem.
(See~\cite{echlpt} for full details.)

In~\cite{bh}, the first two authors of this paper show that 
every stackable group $G$ also has
a finite presentation and admits
a procedure for building van Kampen
diagrams.  Moreover, 
in the case that the set 
$S_\stf := \{(w,a,\stf(e_{\pi(w),a})) \mid w \in A^*, a \in A\}$
obtained from the \wstf\ $\stf$
is decidable, they show that the procedure is
an effective algorithm and the group has solvable
word problem.  In part (1) of
Definition~\ref{def:as} above,
however, the \sr\  (and hence recursive)
set $graph(\stf)$ is a subset of $S_\stf$, namely
$graph(\stf) =S_\stf \cap (\nf \times A \times A^*)$.
We alter the stacking reduction procedure of~\cite{bh}
to solve the word problem for autostackable groups 
as follows.

For a group $G$ with a stackable structure given
by a set $\nf$ of normal forms over a \sym\ generating
set $A$ and a \wstf\ $\stf:\nf \times A \ra A^*$,
the {\em stacking reduction algorithm} on
words over $A$ is a \prs\ given by
\begin{align*}
R_\stf :=
& \{(ya,y\stf(y,a))  \mid y \in \nf, a \in A, ya \notin \nf \cup A^*a^{-1}a\} \\
& \cup
\{(yaa^{-1},y) \mid ya \in \nf, a \in A\}. 
\end{align*}
Recall that starting from any word $w$ in $A^*$, whenever we can decompose
$w$ as $w=ux$ for some rule $(u,v) \in R_\stf$ and word $x \in A^*$,
then we can rewrite $w \ra vx$.  
Each of these rewritings consists either of free reduction
or {\em $\stf$-reduction}. 

\begin{lemma}\label{lem:asimpliescpr}
If $G$ is a group with \sym\ generating set $A$
and a stackable structure consisting of a normal
form set $\nf$ and a \wstf\ $\stf$, then
the \prs\ $R_\stf$ is a \cpr\ for $G$.
\end{lemma}


\begin{proof}
Let $w$ be any word in $A^*$, and write $w=b_1 \cdots b_m$
with each $b_i$ in $A$. 
Suppose that 
$w'=c_1 \cdots c_n$,
with each $c_j$ in $A$, is obtained from $w$
by repeated applications of 
free and $\stf$-reductions, and that
$w' \ra w''$ is a single instance of another $R_\stf$
rewriting operation.
If the rewriting $w' \ra w''$ is a free reduction,
then two letters of $w'$ are removed, and if
this rewriting is a $\stf$-reduction, then a single
letter of $w'$ is replaced by a bounded length word.
Inductively this shows that each letter of the 
word $w''$
is the result of successive rewritings from a specific
letter $b_i$ of the original word $w$.
Viewing this topologically, if the rewriting operation
$w' \ra w''$ is free reduction, then the directed
path in the Cayley graph $\ga(G,A)$ starting at $\gi$
and labeled $w''$ is obtained from the path labeled 
by $w'$ via the removal of two edges, and if the
rewriting is $\stf$-reduction, 
then a single edge $e'$ of the $w'$ path is replaced
by the path $\ff(e')$, where $\ff$ is the \wff\ induced
by the \wstf\ $\stf$.  In the latter case, there is a specific
recursive edge $e_i:=e_{\pi(b_1 \cdots b_{i-1}),b_i} \in \vec E_{\nf,r}$ 
for some index $1 \le i \le m$
on the path labeled $w$ from $\gi$ in $\ga$
such that $e'$ was obtained from $e_i$ via successive applications
of the \wff, and for each recursive edge $e''$ 
along the path $\ff(e')$, we have $e'' <_\stf e'$,
where $<_\stf$ is the strict well-founded partial ordering
given in Definition~\ref{def:as}(2r).
Since at each application of the \wff\ a bounded number
of recursive edges are added to the path, K\"onig's
Infinity Lemma (see, for example,~\cite[Lemma~8.1.2]{konig})
shows that at most finitely many 
$\ff$-reductions can be applied starting from each of the
finitely many edges of the original path labeled $w$.
Hence only finitely many $\stf$-reductions can be applied
in any sequence of rewritings starting from the word $w$.
Between these $\stf$-reductions, only finitely many
free reductions can occur.
Hence after finitely many
$R_\stf$ rewriting operations,
we must obtain an irreducible word $y_w$,  
and so the \prs\ $R_\stf$ is terminating.

Now suppose that $y$ is any irreducible word with
respect to $R_\stf$.  
Write $y=a_1 \cdots a_n$ with each $a_i$ in $A$
and $y_i:=a_1 \cdots a_i$ for each $i$, and
suppose that 
$y_j$ is the shortest prefix of $y$ that does not lie in $\nf$.
Since the empty word $\ew$ lies
in the normal form set $\nf$ of the stackable
structure, we have $j \ge 1$.
Now $y_{j-1} \in \nf$, and
either $y_{j-1}=y_{j-2}a_j^{-1}$, in which
case a free reduction rule of $R_\stf$ applies to $y$, or
else $y_{j-1}$ does not end with the letter
$a_j^{-1}$,  in which case a $\stf$-reduction rule
applies to $y$. However, this contradicts the
irreducibility of $y$.  Therefore every prefix of the
word $y$, including the word $y$ itself, must lie
in $\nf$.
Thus the set of irreducible words
with respect to $R_\stf$ is contained in
the set $\nf$ of normal forms for $G$.

Next suppose that $w$ is any word in the normal
form set $\nf$.
By the termination proof above, there is a
finite sequence of rewritings from $w$ to
an irreducible word $y_w$.  Since every pair 
of words in the \prs\ $R_\stf$ 
represents the same element of the group $G$, then
$w =_G y_w$.  By the previous paragraph,
the irreducible word $y_w$ must lie in $\nf$.
But since each element of $G$ has exactly one
representative in $\nf$, this implies that $w=y_w$.
Hence the set $\nf$ of normal forms for the 
stackable structure is equal to the set
of irreducible words with respect to the
\prs\ $R_\stf$.
Note that this shows both that the set $\nf$
is prefix-closed, and that the set of $R_\stf$-irreducible
words are a set of normal forms.  Hence
$R_\stf$ is convergent.

Finally, since $A$  is a monoid generating set
for $G$, and
the rules of $R_\stf$ define relations of $G$
that give a set of normal forms for $G$,  
the \cpr\ $R_\stf$ gives a monoid presentation of $G$.
\end{proof}


Recall that the normal form of the identity element
in an autostackable group must be the empty word.
Decidability of the set $graph(\stf)$
implies that
for any word $w \in A^*$, 
one can determine whether or not a
$\stf$-reduction applies, and so
Lemma~\ref{lem:asimpliescpr} completes the word problem solution
in that case.

An immediate consequence of Lemma~\ref{lem:asimpliescpr}
is that the {\em stacking presentation}
\[
G=\langle A \mid \{\stf(y_g,a)a^{-1} \mid g \in G, a \in A\}\rangle
\]
is a (group) presentation for the stackable group $G$; 
property (2) of the definition of stackable
implies that this presentation is finite.
In~\cite[Proposition~1.12]{bh}, the 
first two authors of this paper show how to
use computability of the set $S_\stf$ to
obtain an algorithm for constructing van Kampen
diagrams over this presentation; a similar alteration of 
the proof shows that
this algorithm applies in the case that
$graph(\stf)$ is recursive.
However, in~\cite[Proposition~1.12]{bh}, another hypothesis was
included, that the generating set $A$ of the stackable
structure did not include a letter representing
the identity element of the group.  We note that
given any autostackable 
structure for a
group $G$, with inverse-closed generating set
$A$, normal forms $\nf$, and stacking function $\stf$,
if $A' \subset A$ is the set of letters in $A$
representing $\gi$,
then since the normal form set is prefix-closed,
no element of $\nf$ can contain a letter from $A'$.
It can then be shown that $G$ is also autostackable
over the inverse-closed generating set $B:=A \setminus A'$,
with the same normal form set $\nf$, and 
the stacking function $\stf':\nf \times B \ra B^*$
given by setting $\stf'(y,b)$ equal to the
word $\stf(y,b)$ with all instances of letters
in $A'$ removed.  

Hence we have the following.

\begin{proposition}\label{prop:solvwp}
Autostackable groups are finitely presented,
have solvable word problem, and admit
a recursive algorithm which, upon input of
a word $w \in A^*$ with $\pi(w) = \gi$,
builds a van Kampen diagram for
$w$ over the stacking presentation.
\end{proposition}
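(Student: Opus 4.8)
The plan is to establish the three assertions of Proposition~\ref{prop:solvwp} by leveraging the infrastructure already built up in the excerpt, principally Lemma~\ref{lem:asimpliescpr} together with the decidability afforded by the \sr\ set $graph(\stf)$. Finite presentability is the most immediate: an autostackable structure is in particular a stackable structure, and the discussion following Lemma~\ref{lem:asimpliescpr} shows that the stacking presentation $G=\langle A \mid \{\stf(y_g,a)a^{-1} \mid g \in G, a \in A\}\rangle$ is a presentation for $G$, with property~(2) of Definition~\ref{def:as} (the length bound $k$ on each $\stf(y_g,a)$) forcing the relator set $R_c$ to be finite. So the first clause requires only a sentence citing this.

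For solvable word problem, I would first record that $graph(\stf)$, being \sr, is a recursive (decidable) set. The key step is to observe that this decidability lets us run the stacking reduction algorithm $R_\stf$ effectively. Concretely: given a word $w \in A^*$, to apply one rewriting step we must locate a decomposition $w = ux$ matching a rule of $R_\stf$; free reductions are trivially detectable, and for a $\stf$-reduction we must find the shortest prefix of the current word that fails to lie in $\nf$ and then look up the value $\stf(y,a)$. Since $\nf$ is exactly the set of $R_\stf$-irreducible words (proved in Lemma~\ref{lem:asimpliescpr}), membership in $\nf$ is itself decidable via $graph(\stf)$, and the value $\stf(y,a)$ is computable by searching $graph(\stf)$ for the unique triple $(y,a,z)$. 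Lemma~\ref{lem:asimpliescpr} guarantees that $R_\stf$ is a \cpr, so this reduction terminates at the unique irreducible representative, which equals $\ew$ precisely when $\pi(w)=\gi$ (recalling that the normal form of $\gi$ is the empty word). This decides the word problem.

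For the van Kampen diagram algorithm, the approach is to combine the termination-and-convergence content of Lemma~\ref{lem:asimpliescpr} with the ``seashell'' construction of Section~\ref{sub:nfvkd}. Since $\nf$ is prefix-closed, its words label simple paths, so the seashell method applies; each elementary diagram $\Delta_i$ for $y_{i-1}b_iy_i^{-1}$ is assembled from the relators $\stf(y,a)a^{-1}$ encountered along the reduction, and gluing them along the normal-form paths produces a diagram for $w$. The essential point is that each individual $R_\stf$ rewriting step corresponds to a van Kampen diagram move realized by a single defining relator (for a $\stf$-reduction) or a free cancellation (for a free reduction), exactly as in the topological discussion preceding Lemma~\ref{lem:asimpliescpr} where an edge $e'$ is replaced by the path $\ff(e')$. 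Since the reduction sequence is finite and each step is effectively computable from $graph(\stf)$, the diagram can be assembled algorithmically. I would remark that this is the alteration, described in the excerpt, of the proof of~\cite[Proposition~1.12]{bh}, now using recursiveness of $graph(\stf)$ in place of the decidability hypothesis on $S_\stf$ used there, together with the generating-set reduction to $B = A \setminus A'$ to discard the redundant hypothesis about letters representing $\gi$.

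The main obstacle I anticipate is bookkeeping the effectiveness of the van Kampen construction rather than any deep mathematical difficulty: one must verify carefully that locating each rewriting rule, and in particular identifying the relator $\stf(y,a)a^{-1}$ attached at each step, is genuinely algorithmic given only recursiveness (not full \sr\ structure) of $graph(\stf)$, and that the recursive assembly of the planar diagram halts. Because Lemma~\ref{lem:asimpliescpr} already supplies termination of $R_\stf$ via K\"onig's Infinity Lemma, the halting of the overall procedure follows, so the remaining work is primarily to transcribe the~\cite{bh} argument into the present setting and confirm that no step secretly requires more than decidability of $graph(\stf)$.
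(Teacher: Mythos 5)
Your proposal is correct and follows essentially the same route as the paper: finite presentability via the finite stacking presentation, solvability of the word problem by running the convergent prefix-rewriting system $R_\stf$ of Lemma~\ref{lem:asimpliescpr} effectively using decidability of $graph(\stf)$ (with the empty normal form for $\gi$), and the van Kampen algorithm via the seashell method with each subdiagram $\Delta_i$ built recursively from the relators $\stf(y,a)a^{-1}$, terminating by the well-foundedness of $<_\stf$. The only cosmetic difference is that the paper phrases the construction of each $\Delta_i$ as a Noetherian induction on $<_\stf$ (degenerate edges give 2-cell-free segments, recursive edges glue previously built subdiagrams plus one new 2-cell), which is the same mechanism you invoke through the finiteness of the reduction sequence.
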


We include a few more details here to illustrate
the difference between the van Kampen diagrams
built from an autostackable structure 
and those built from a prefix-closed automatic
structure.
For an autostackable group,
since the set of normal forms is prefix-closed,
each normal form word must label a 
simple path in the Cayley graph $\ga$,
and as in the case of automatic groups,
we extend the ``seashell'' method described
in Section~\ref{sub:nfvkd} to a diagram-building
algorithm.  Given a word $w=b_1 \cdots b_n$
with each $b_i \in A$ and such that 
$\pi(w)=\gi$, 
and letting $y_i:=y_{b_1 \cdots b_i}$ for each $i$,
this method requires an algorithm for
building van Kampen diagrams $\Delta_i$ for 
the words $y_{i-1}b_iy_i^{-1}$, which then
can be glued as in Figure~\ref{fig:seashell}
to obtain the diagram for $w$. 
However, in this case the van Kampen 
diagram  $\Delta_i$ will not be
``thin'' in general, but instead is built
by recursion using 
property (2) of Definition~\ref{def:as}.
If the directed edge $e_{y_{i-1},b_i}$
of $\ga$ is degenerate, then the van Kampen
diagram $\Delta_i$ is homeomorphic to a
line segment, containing no 2-cells;
this is pictured in Figure~\ref{fig:asdeg}.
\begin{figure}
\begin{center}  
$~$\hspace{-2.7in}~\includegraphics[width=1.9in,height=0.5in]{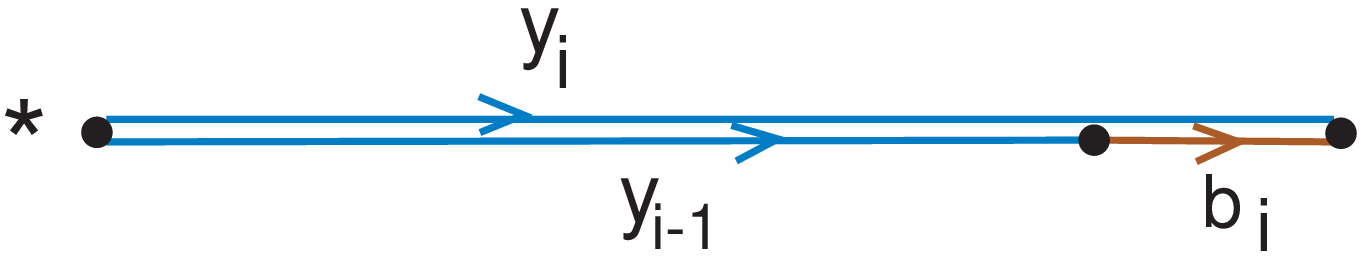}~$~~~~~~~\ \ \ \ \ \ \ \ \ \ \ \ \ \ \ \ \ ~~~~~~~~~~~~~~~~~~$~\includegraphics[width=1.9in,height=0.5in]{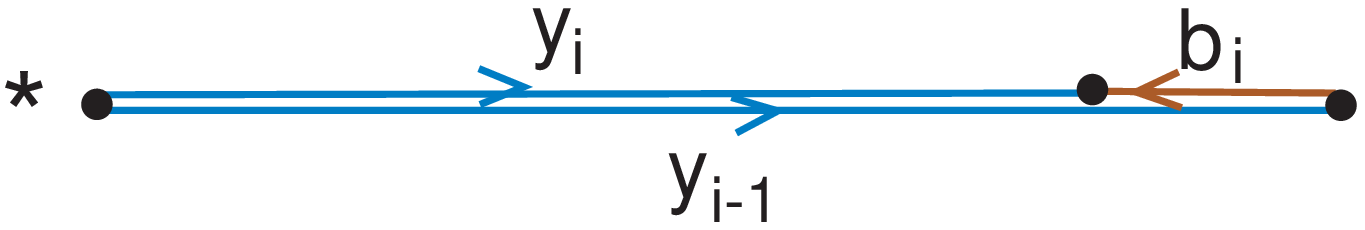}
\caption{Degenerate van Kampen diagrams}\label{fig:asdeg}
\end{center}
\end{figure}
On the other hand, if the edge $e_{y_{i-1},b_i}$
is recursive, and we write
$\stf(y_{i-1},b_i)=a_1 \cdots a_m$ with each
$a_j \in A$, then by Noetherian induction 
(using the well-founded strict partial ordering $<_\stf$)
we may assume
that for each $1 \le j \le m$
we have already built a van Kampen diagram $\Delta_j'$
for the word 
$y_{j-1}'a_jy_j'^{-1}$, 
where $y_j'$ denotes the normal form
word representing the element $y_{i-1}a_1 \cdots a_j$
for each $j$.
Successively gluing these diagrams $\Delta_j'$, 
or {\em stacking} them,
along their common (simple) boundary paths 
$y_j'$,
we obtain a planar diagram with boundary word 
$y_{i-1}\stf(y_{i-1},b_i)y_i^{-1}$.
Finally, glue on a single 2-cell whose
boundary is labeled by the word 
$\stf(y_{i-1},b_i)^{-1}b_i$ to obtain 
the required van Kampen diagram $\Delta_i$.  
This 
process is illustrated in Figure~\ref{fig:asrec}.
\begin{figure}
\begin{center}  
\includegraphics[width=3.4in,height=1.3in]{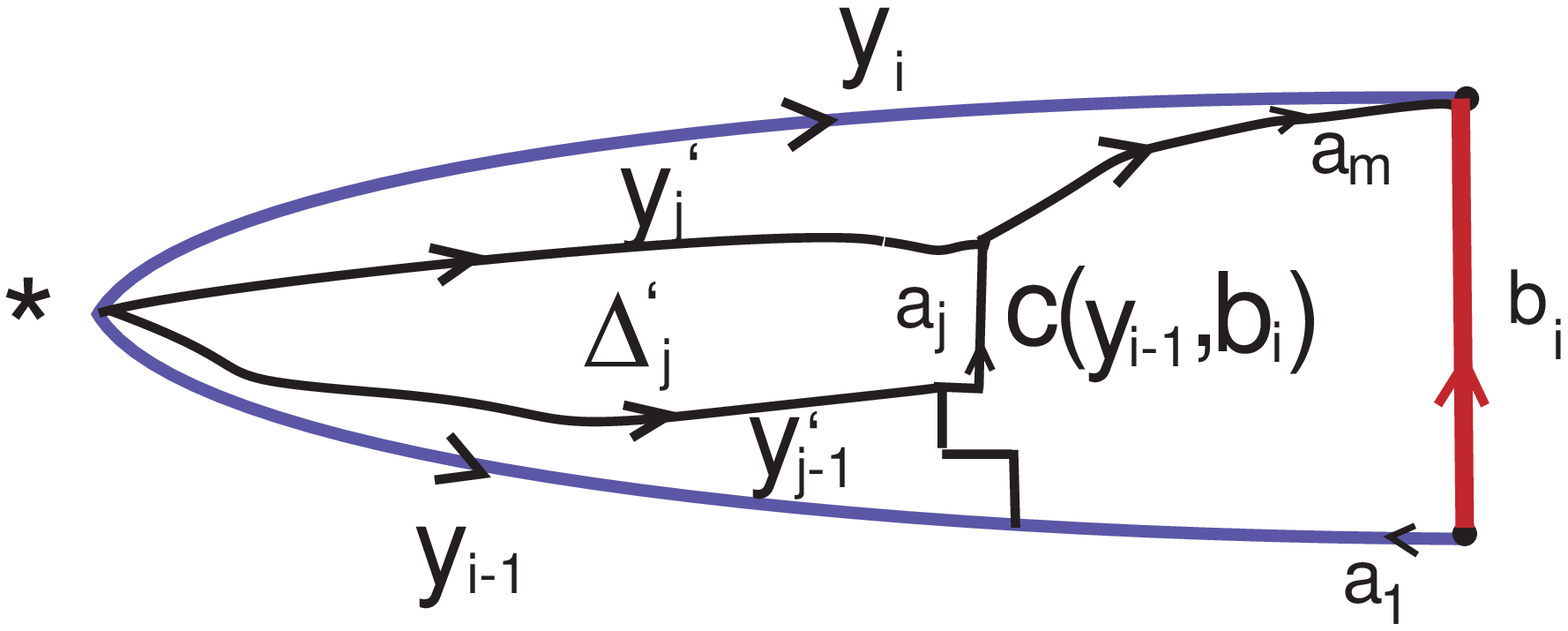}
\caption{Recursive van Kampen diagram}\label{fig:asrec}
\end{center}
\end{figure}


\section{Asynchronously automatic groups} \label{sec:asa}



A group $G$ with finite \sym\ generating set $A$
is \asa\ if there is a regular language $\nf=\{y_g \mid g \in G\}$ of
normal forms for $G$ over $A$ such that for each
$a \in A$ the subset
\[
L_a:=\{(y_g,y_{ga}) \mid g \in G\}
\]
of $A^* \times A^*$ is an \are\ language.
Every automatic group is also \asa.
This section is devoted to the proof of 
Theorem~\ref{thm:asa}.

\begin{theorem}\label{thm:asa} 
Every group that has an \asa\ 
structure with a prefix-closed normal form set
is autostackable.
\end{theorem}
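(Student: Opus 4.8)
The plan is to verify the topological characterization of autostackability in Proposition~\ref{prop:asflow}(2): I will exhibit a maximal tree in $\ga(G,A)$ together with a bounded flow function whose \wisf\ has \sr\ graph. Since the given normal form set $\nf$ is prefix-closed, it determines a maximal tree $T$ in $\ga$, namely the union of all paths labeled by words of $\nf$, and the tree edges are exactly the degenerate edges, so the recursive edges $\ves$ are precisely the non-tree edges. The empty word lies in $\nf$, so $T$ is rooted at $\gi$ and every prefix of a normal form is again a normal form; in particular each $g$ has a tree-parent $gb^{-1}$, where $b$ is the last letter of $y_g$. A flow function $\ff$ must fix the tree edges (property (b-d)), so the real work is to define $\ff$ on the recursive edges and to verify properties (a), (b-r), and synchronous regularity.

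First I would record the asynchronous fellow-traveler data. Fix, for each $a\in A$, an \aau\ recognizing $L_a=\{(y_g,y_{ga})\mid g\in G\}$, and read the pair $(y_g,y_{ga})$ along its shuffle. At each shuffle checkpoint the machine has consumed a prefix $u$ of $y_g$ and a prefix $v$ of $y_{ga}$, both of which lie in $\nf$ by prefix-closure, and the word difference $\pi(u)^{-1}\pi(v)$ ranges over a finite set of bounded-length elements; this bounded asynchronous fellow-traveler property is exactly what the automaton encodes. A single shuffle step changes one of $u,v$ by one letter, which traverses a tree edge. I would then define $\ff(\ega)$, for a recursive edge $\ega$, by one ``downward'' shuffle step: peel the last letter off $y_g$ or $y_{ga}$ to reach the preceding checkpoint, cross a fixed geodesic path for the word difference there, and return along the peeled tree edge. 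Concretely this replaces $\ega$ by (a tree edge)$\,\cdot\,$(a fixed bounded word for the preceding word difference)$\,\cdot\,$(a tree edge), so $\ff(\ega)$ has length bounded by the fellow-traveler constant plus two, the map $\ff$ is bounded, and $\rho(\ff(\ega))$ represents $a$ with the same endpoints as $\ega$, giving property (a).

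For the algorithmic condition I would show $graph(\stf)$ is \sr, where $\stf$ is the \wisf. Because $\ff$ is bounded, $\stf(y_g,a)=\rho(\ff(\ega))$ takes only finitely many values $s$, so it suffices to prove that for each pair $(a,s)$ the fiber $\{y_g\mid \stf(y_g,a)=s\}$ is a regular subset of $\nf$; the graph is then a finite union of diagonal-type products, and synchronous regularity follows from Lemmas~\ref{lem:diagonal} and~\ref{lem:product}. The value $s$ is determined by the last peeled letter and the preceding word difference, which are precisely the data computed by the \aau\ near the end of its run on $(y_g,y_{ga})$. Feeding the regular language $\nf$ into the automaton, using that the first-coordinate projection of an \are\ language is regular (Lemma~\ref{lem:proj}), and stripping the bounded suffix information with the quotient operation of Lemma~\ref{lem:peel}, I would realize each fiber as a Boolean combination of regular languages.

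The main obstacle is property (b-r): that the relation on recursive edges has well-founded transitive closure. The naive length-based orders fail, because one downward shuffle step replaces $\ega$ by a word-difference path whose interior vertices can bulge away from the tree by up to the fellow-traveler constant, so neither $\max(|y_g|,|y_{ga}|)$ nor $|y_g|+|y_{ga}|$ need strictly decrease along $<$; indeed a rung edge just below $\ega$ can already sit at the same maximal normal-form length. I would instead order recursive edges by a lexicographic potential whose primary term is the shuffle position at which the edge appears among the finitely many word-difference cells, refined by the finite word-difference state, and argue that each downward step strictly decreases this potential. Equivalently, since $\ff$ is bounded the flow graph has finite out-degree, so by K\"onig's Infinity Lemma (exactly as in the proof of Lemma~\ref{lem:asimpliescpr}) it suffices to exclude infinite descending chains, which I would do by showing that repeated pushing drives every recursive edge monotonically through the finite family of word-difference cells down to the tree. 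Making this potential precise, and checking that the flow genuinely decreases it despite the bulging rungs, is the technical heart of the argument.
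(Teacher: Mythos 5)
Your overall strategy (use prefix-closure to get the tree, read the flow off the asynchronous automaton, prove regularity fiberwise) points in the right direction, and your regularity sketch is essentially parallel to the paper's. But there is a genuine gap at exactly the place you flag as ``the technical heart'': property (b-r)/(2r). Your flow sends $\ega$ one shuffle checkpoint back, across the word-difference path there, and the interior edges of that word-difference path are arbitrary recursive edges $e'=e_{g',a'}$. Each such $e'$ lives in its \emph{own} fellow-traveler diagram for the pair $(y_{g'},y_{g'a'})$, so your proposed potential --- ``shuffle position among the word-difference cells, refined by the word-difference state'' --- is not comparable across the descent: the shuffle position of $e'$ in its own diagram bears no relation to the position of $\ega$ in the diagram for $(y_g,y_{ga})$. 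The K\"onig's Lemma reduction only converts well-foundedness into ``no infinite descending chains,'' which is the same statement; it does not supply the missing monotone quantity. As you yourself observe, normal-form length need not decrease along a rung, so nothing in your construction forces descent to terminate.

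The paper sidesteps this entirely by building $\stf(y_g,a)$ so that its path contains \emph{exactly one} recursive edge, and that edge has strictly shorter initial normal form. Concretely: after invoking the boundedness theorem \cite[Theorem~7.2.4]{echlpt} (which you implicitly assume when you say the checkpoint word differences are bounded --- this needs to be cited, since a general \aau\ has unbounded tape asynchrony), one cuts the shuffle of $(y_g\#,y_{ga}\#)$ at the point where exactly $C+1$ letters of $y_g$ remain, lands in a state $q$, and replaces the tail of the shuffle by a fixed short accepting word $W_q=\sigma_{M_a,q}(p_q\#,r_q\#)$ with $l(p_q)+l(r_q)\le C-2$. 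This produces a pair $(up_q,vr_q)\in L_a$, i.e.\ an actual $a$-labeled edge between two \emph{normal-form} vertices, and one sets $\stf(y_g,a):=s^{-1}p_qar_q^{-1}t$. Prefix-closure makes the $s^{-1}p_q$ and $r_q^{-1}t$ portions pure tree paths, so the only recursive edge on $\stf(y_g,a)$ is the middle $a$-edge at $up_q$, whose initial normal form has length $l(u)+l(p_q)<l(u)+C+1=l(y_g)$. Well-foundedness is then immediate from descent of $l(y_g)$ on $\N$. To repair your argument you would need to replace ``one checkpoint back'' by this ``jump back a fixed distance and re-close the automaton run'' device (or something equivalent that isolates a single controlled recursive edge); without it, the ordering you define is not shown to be well-founded, and I do not see how your lexicographic potential can be made coherent across diagrams.
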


\begin{proof}
Let $G$ be an \asa\ group with 
finite \sym\ generating set $A$ and prefix-closed
normal form set $\nf$, and for each
$a \in A$ let $M_a=(A \cup \{\#\},Q_a,q_{0},P_a,\delta_a)$ 
be an \aau\ accepting the
language $L_a$.
By~\cite[Theorem~7.2.4]{echlpt}, we may also assume
that the \asa\ structure is {\em bounded}.  That is,
there is a constant $C$ such
that for each pair $(u,v) \in L_a$, the 
shuffle $\sigma_{M_a,q_0}(u\#,v\#)$ (see Section~\ref{sub:reg}
for the definition of this notation) of the pair $(u\#,v\#)$
has the form $\sigma_{M_a,q_0}(u,v)=u_1v_1 \cdots u_mv_m$
where $u=u_1 \cdots u_m$, $v=v_1 \cdots v_m$,
each $u_i,v_i \in (A \cup \{\#\})^*$, and the lengths of
these subwords satisfy $0 \le l(u_1) \le C$,
$1 \le l(u_i) \le C$ for all $2 \le i \le m$,
$1 \le l(v_i) \le C$ for all $1 \le i \le m-1$,
and $0 \le l(v_m) \le C$.

By increasing this constant if necessary, we may
also assume that $C$ is greater than 3 and greater than the 
cardinality $|Q_a|$ of the set of states of the
automaton $M_a$ for every $a \in A$.
We can view $M_a$ as a finite graph with vertex set
$Q_a$ and a directed edge labeled $b \in A \cup \{\#\}$ from
$\hat q$ to $\tilde q$ whenever $\delta_a(\hat q,b)=\tilde q$.
Let $\qgood$ be the set of all states $q$
in $Q_{a,1} \cup Q_{a,2}$ such that there
is a path in $M_a$ from the initial state $q_0$ to $q$ and there also is a
path in $M_a$ from $q$ to the accept state $q_f$. For each $q \in \qgood$,
by eliminating repetition of vertices along the path to $q_f$, there
must also be a directed edge path in $M_a$ from $q$ to $q_f$ of
length less than $C$.  Let $W_q \in (A \cup \{\#\})^*$
be a fixed choice of such a word, for each such $q$.
Note that this word must contain two instances of the
letter $\#$, and we can write 
$W_q=\sigma_{M_a,q}(p_q\#,r_q\#)$
for two words $p_q,r_q \in A^*$ satisfying 
$l(p_q)+l(r_q) \le C-2$.

To define the function $\stf:\nf \times A \ra A^*$,
we first set $\stf(y_g,a):=a$ whenever 
the edge $\ega$ lies in the
set $\dgd=\vec E_{\nf,d}$ of degenerate edges 
of the Cayley graph $\ga(G,A)$ with respect to the
set $\nf$ of normal forms; 
i.e.~whenever either
$y_ga=y_{ga}$ or $y_{ga}a^{-1}=y_g$, as required
for property (2d) of Definition~\ref{def:as}.
Now suppose that $\ega$ is recursive.
If $l(y_g)+l(y_{ga}) \le C^2+3C$, then
define $\stf(y_g,a):=y_g^{-1}y_{ga}$.

On the other hand, suppose that 
$l(y_g)+l(y_{ga}) > C^2+3C$.
The pair $(y_g,y_{ga})$ is accepted by the
\aau\ $M_a$, and so
the word $w:=\sigma_{M_a,q_0}(y_g\#,y_{ga}\#)$
is a shuffle of $(y_g\#,y_{ga}\#)$ and
satisfies $\delta_a(q_0,w)=q_{f}$, the accept
state of $M_a$.
The bounded property above implies that
$l(y_{ga}\#) \le Cl(y_g\#)$, and so
$l(y_g\#)+Cl(y_g\#) > C^2+3C+2$, which 
gives $l(y_g) > C+1$.
Write $w=w'w''$ where $w''$ is the shortest suffix of
$w$ containing exactly $C+1$ letters from $y_g$
(i.e., $C+2$ letters from $y_g\#$).
Applying the bounded property again shows that 
$w'=\sigma_{M_a,q_0}(u,v)$ and $w''=\sigma_{M_a,q}(s\#,t\#)$
where $q=\delta_a(q_0,w') \in \qgood$, 
$u,v,s,t \in A^*$, $us=y_g$, $vt=y_{ga}$,
$l(s)=C+1$, and
$1 \le l(t\#) \le C^2+2C$.
Returning to the view of $M_a$ as a finite graph, 
the word $w'$ labels a path from $q_0$ to $q$ and
$w''$ labels a path from $q$ to $q_f$.
Thus the word $\tilde w:=w'W_q$ also labels
a path from $q_0$ to $q_f$, and so the
pair $(up_q,vr_q)$ lies in the language $L_a$.
Note that $usa =_G vt$ and $up_qa =_G vr_q$, and so
$s^{-1}p_qar_q^{-1}t =_G a$.  
Moreover, the pair $(s,t)$ and the state $q$
are uniquely determined
by $(y_g,y_{ga})$; i.e., by $g$ and $a$.
In this case
we define $\stf(y_g,a):=s^{-1}p_qar_q^{-1}t$.

With this definition of the \wstf\ $\stf$,
the length of the word $\stf(y_g,a)$ is at most
$C^2+3C$ for all $g \in G$ and $a \in A$,
and in each case $\stf(y_g,a) =_G a$.

Now suppose that $e=\ega$ and $e'=e_{g',a'}$ are recursive
edges such that $e'$ lies on the directed
path in the  Cayley graph $\ga$ starting
at the vertex $g$ and labeled by the word
$\stf(y_g,a)$.  Now when $l(y_g)+l(y_{ga})  \le C^2+3C$
the path $\stf(y_g,a)=y_g^{-1}y_{ga}$ follows
only degenerate edges, so we must have
 $l(y_g)+l(y_{ga}) > C^2+3C$.  In this case, the
path starting at $g$ and labeled  by the word 
$\stf(y_g,a):=s^{-1}p_qar_q^{-1}t$ defined above
follows only degenerate edges along the subpaths
labeled by
$s^{-1}p_q$ and $r_q^{-1}t$, since 
$us=y_g$, $vt=y_{ga}$, and
$(up_q,vr_q) \in L_a$, and so
the words
$vr_q,vt \in \nf$ as well.
So the edge $e'$ must be the edge
labeled $a'=a$ with initial vertex 
$g'=_G gs^{-1}p_q =_G up_q$.  That is,
we have normal forms $y_g=us$ with $l(s)=C+1$
and $y_{g'}=up_q$ with $l(p_q) \le C$, and 
so the normal form to the initial vertex of $e'$
is strictly shorter than the normal form
to the initial vertex of $e$.
Hence the relation $<_\stf$ defined in
property (2r) of Definition~\ref{def:as}
strictly increases the length of the normal form
of the initial vertex of the edges,
and so is a well-founded strict partial ordering.
Therefore $G$ is stackable over $A$.

By hypothesis the normal form set $\nf$ is a
regular language, and so for each $a \in A$, 
an application of Lemma~\ref{lem:peel}
shows that the language
$J_a:= \{y \mid ya \in \nf\}$ is regular.
Lemma~\ref{lem:product} then shows that 
the languages $J_a \times \{a\} \times \{a\}$
and $(\nf \cap A^*a^{-1}) \times \{a\} \times \{a\}$
are \sr.  The finite union of these sets
for $a \in A$ is the subset
of $graph(\stf)$
corresponding to the application of $\stf$ to
degenerate edges, and therefore this set is \sr.

The subset
\[
L_{smallrec}:=
\{(y_g,a,\stf(y_g,a)) \mid g \in G, a \in A, \ega \in \ves \text{ and }
   l(y_g)+l(y_{ga})  \le C^2+3C\}
\]
of $graph(\stf)$ is finite, and therefore also is \sr.
For use in avoiding overlapping sets later, denote
 $J_{a,smallrec}:=\{y_g \mid (y_g,a,\stf(y_g,a)) \in L_{smallrec}\}$.

For each $a \in A$ and $q \in \qgood$, let
\[
K_{a,q} := \{(u,v) \mid u,v \in A^*, \delta_a(q_0,\sigma_{M_a,q_0}(u,v))=q\},
\]
and note that by definition of $\qgood$ the set $K_{a,q}$ is nonempty.
This subset of $A^* \times A^*$ is \are; in particular, if $q \in Q_{a,1}$, then
$K_{a,q}$ is the accepted language of the \aau\ 
$\widetilde M=(A \cup \{\#\},\widetilde Q,q_0,P_a,\tilde \delta)$
where 
$\widetilde Q_{1}=Q_{a,1}$, $\widetilde Q_{2}=Q_{a,2}$,
$\widetilde Q_{1}^\# = \emptyset$, $\widetilde Q_{2}^\#=\{\tilde q\}$,
and $\tilde \delta(q',b)=\delta(q',b)$ for all
$q' \in \widetilde Q_{a,1} \cup \widetilde Q_{a,2}$ and $b \in A$,
$\tilde \delta(q,\#)=\tilde q$, $\tilde \delta(\tilde q,\#)=q_f$, and
$\tilde \delta(q',b) = F$ otherwise.  The case that $ q \in Q_{a,2}$
is similar.
Then Lemma~\ref{lem:proj} shows that the set
$\rho_1(K_{a,q}) = \{ u \mid \exists (u,v) \in K_{a,q}\}$
is a regular language.

Let 
\[
S_{a,q}:=\{(s,t) \mid s,t \in A^*, l(s)=C+1, \text{ and } 
    \delta(q,\sigma_{M_a,q}(s\#,t\#)) = q_f\}.
\]
The boundedness of the \asa\ structure implies that $l(t)<C^2+2C$, and 
the set $S_{a,q}$ is finite. 
Moreover, note that if $(s,t),(s,t') \in S_{a,q}$, then if we
let $(u,v)$ be an element of the nonempty set $K_{a,q}$,
we have $(us,vt),(us,vt') \in L_a$, and so $vt,vt'$ are
both normal form words representing the same element
$\pi(usa)$ of $G$; hence $t=t'$.  Thus for each $s \in A^{C+1}$,
there is at most one word $t_{q,s}$ such that the pair 
$(s,t_{q,s}) \in S_{a,q}$.

Next for each $a \in A$, $q \in \qgood$, and $(s,t) \in S_{a,q}$,
let
\[
L_{a,q,s} := \rho_1(K_{a,q})s \cap 
[A^* \setminus (J_a \cup (\nf \cap A^*a^{-1}) \cup J_{a,smallrec})].
\]
This is the set of words 
$us \in A^*s$ such that $us \in \nf$, the edge
$e_{\pi(us),a}$ is recursive, $l(us)+l(y_{usa}) > C^2+3C$,
and the path labeled $\sigma_{M_a,q_0}(us,y_{usa})$ goes
from $q_0$ through $q$ to $q_f$ in $M_a$.
Closure properties of regular sets shows that
this language is regular.
Applying Lemma~\ref{lem:product} again,
the language $L_{a,q,s} \times \{a\} \times \{s^{-1}p_qar_q^{-1}t_{q,s}\}$
is a \sr\ subset of $graph(\stf)$ corresponding to these
recursive edges.

We can now write the graph of the \wstf\ $\stf$ as
the finite union
\begin{align*}
graph(\stf) = [\cup_{a \in A} & 
  ((J_a \cup (\nf \cap A^*a^{-1}) \times \{a\} \times \{a\})] \\
& \cup
L_{smallrec} \\
& \cup
[\cup_{a \in A, q \in \qgood, (s,t_{q,s}) \in S_{a,q}}
  (L_{a,q,s} \times \{a\} \times \{s^{-1}p_qar_q^{-1}t_{q,s}\})].
\end{align*}
Closure of the class of \sr\ sets under finite unions then
shows that $graph(\stf)$ is \sr.
Thus $G$ is autostackable.
\end{proof}


\section{Rewriting systems} \label{sec:fcrs}


In this section we prove the characterization of
autostackable groups in terms of \sr\ bounded {\cpr}s,
and conclude with a discussion of {\fcr}s.
We begin by discussing a process for minimizing
{\prs}s.

\begin{definition}\label{def:min}
A \cpr\ $R \subset A^* \times A^*$ for a group $G$
is {\em \pro} if: 
\begin{itemize}
\item[(a)] For each $a \in A$ there is a letter in $A$,
which we denote $a^{-1}$, such that
$\pi(a)^{-1}=\pi(a^{-1})$ (where $\pi:A^* \ra G$ is the canonical map).
\item[(b)] For each pair $(u,v) \in R$, every 
proper prefix of $u$ is irreducible with respect to
the rewriting operations of $R$.
\item[(c)] Whenever $(u,v_1),(u,v_2) \in R$, then $v_1=v_2$.
\end{itemize}
\end{definition}

For any \prs\ $R$ over $A$, let Irr($R$) denote
the set of irreducible words with respect to the
rewriting operations $ux \ra vx$ whenever
$(u,v) \in R$ and $x \in A^*$.  Note that
every prefix of a word in Irr($R$) must also
lie in Irr($R$).

\begin{proposition}\label{prop:min}
If a group $G$ admits a \sr\ \bcpr\ $R$ over a monoid generating set $B$,
then $G$ also admits a \pro\ \sr\ \bcpr\ $Q$ over 
the generating set $A:=B \cup B^{-1}$, such that Irr($R$) = Irr($Q$).
\end{proposition}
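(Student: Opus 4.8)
The plan is to build $Q$ in three stages, matching the three clauses of Definition~\ref{def:min}, and at each stage to preserve synchronous regularity, boundedness, convergence, and the set of irreducibles. A preliminary observation I would use throughout is that $\mathrm{Irr}(R)$ is regular: the left-hand-side set $\mathrm{LHS}(R)$ is the first-coordinate projection of the \sr\ relation $R$, hence regular by Lemma~\ref{lem:proj}, so the reducible words form the regular set $\mathrm{LHS}(R)\cdot A^*$ and $\mathrm{Irr}(R)$ is its complement. Since stages (b) and (c) below do not change the alphabet, property (a) of Definition~\ref{def:min} will hold as soon as the alphabet has been enlarged to $A=B\cup B^{-1}$.

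Stage~(a), passing from $B$ to $A=B\cup B^{-1}$: I would keep the old rules and adjoin a family $N$ of new rules that make every word containing a letter of $B^{-1}$ reducible, but in a \emph{bounded} way. For $y\in\mathrm{Irr}(R)$ and $c\in B^{-1}$ a rule with left side $yc$ is forced, since $y$ has no reducible proper prefix; but rewriting $yc$ directly to a normal form need not be bounded. Instead, fixing a constant $\kappa$ and writing $y=y_0p$ with $|p|=\kappa$, I would use the rule $yc\ra y_0\tau$, where $\tau\in B^*$ is the $R$-normal form of $\pi(pc)$ (treating the finitely many short $y$ separately). There are only finitely many elements $\pi(pc)$, so their normal forms have bounded length and the rules are bounded. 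Each application strictly lowers the number of $B^{-1}$-letters while $R$-rewritings preserve it, so $R':=R\cup N$ terminates; a word lies in $\mathrm{Irr}(R')$ iff it lies in $B^*$ and is $R$-irreducible, giving $\mathrm{Irr}(R')=\mathrm{Irr}(R)$ and convergence. For synchronous regularity I would split $N$ by the finitely many suffix pairs $(p,c)$: for fixed $(p,c)$ the rules form $\{(y_0pc,\,y_0\tau)\mid y_0\in\mathrm{Irr}(R)/p\}$, where $\mathrm{Irr}(R)/p$ is regular by Lemma~\ref{lem:peel}, and this ``common prefix followed by fixed bounded suffixes'' relation is \sr\ by variants of Lemmas~\ref{lem:diagonal} and~\ref{lem:product}. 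A finite union makes $N$, and hence $R'$, \sr.

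Stage~(b): I would pass to $Q^{(b)}:=\{(u,v)\in R'\mid\text{every proper prefix of }u\text{ is }R'\text{-irreducible}\}$. Taking, for any reducible word, its \emph{shortest} reducible prefix shows $\mathrm{Irr}(Q^{(b)})=\mathrm{Irr}(R')$, so termination (a sub-system of $R'$) and the normal-form property survive, and clause (b) holds by construction. The defining condition ``$u$ has no reducible proper prefix'' says exactly $u\notin\mathrm{LHS}(R')\cdot A^+$, a regular constraint on the first coordinate, so $Q^{(b)}$ is \sr\ by intersecting $R'$ with the \sr\ set supplied by Lemma~\ref{lem:product}.

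Stage~(c): among all rules of $Q^{(b)}$ sharing a left side $u$ I would keep only the one whose right side is shortlex-least for the order $\slex$. The left-hand-side set is unchanged, so $\mathrm{Irr}$, termination, and convergence are again preserved, and clause (c) holds. Proving that this selection stays \sr\ is the step I expect to be the main obstacle, and it is exactly where boundedness is essential: writing each rule as $(ws,wt)$ with $|s|,|t|\le k$, the finitely many candidate right sides for a fixed $u$ all agree off a suffix of bounded length, so comparing two of them under $\slex$ reduces to comparing word lengths (the constants $|t|-|s|$) and, in case of a tie, a bounded suffix of $u$ together with the fixed words $s,t$. Thus for each pair of suffix-types the set of $u$ on which one type wins is regular, and $Q$ is a finite union of \sr\ pieces $E_{s,t}\cap(U_{s,t}\times A^*)$, where $E_{s,t}=\{(ws,wt)\mid w\in A^*\}$ is \sr\ and $U_{s,t}$ is the regular winning set. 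The resulting $Q$ is then a \pro\ \sr\ \bcpr\ over $A$ with $\mathrm{Irr}(Q)=\mathrm{Irr}(R)$.
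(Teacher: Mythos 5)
Your proposal is correct and follows the same overall architecture as the paper's proof: adjoin formal inverse letters together with bounded rules that eliminate them, cut down to rules whose left sides have every proper prefix irreducible, select one rule per left side using the finitely many bounded suffix types $(s,t)$, and verify synchronous regularity of each piece via Lemmas~\ref{lem:peel}, \ref{lem:diagonal} and~\ref{lem:product} applied to projections of padded languages (the paper obtains $\mathrm{LHS}(R)$ as the homomorphic image $\rho_1(\pad(R))$ rather than via Lemma~\ref{lem:proj}, but these are interchangeable). Two of your choices differ in detail, both harmlessly. For the inverse letters the paper uses the rules $(yb^{-1},yz_b)$ for $y\in\mathrm{Irr}(R)$ not ending in $b$, together with the free reductions $(ybb^{-1},y)$, where $z_b$ is the $R$-normal form of $\pi(b)^{-1}$; your window rule $y_0pc\ra y_0\tau$ is also valid, but the length-$\kappa$ window and the separate handling of short $y$ are unnecessary, since one may simply take the common prefix to be all of $y$ and the suffix pair to be $(b^{-1},z_b)$. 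For clause (c) of Definition~\ref{def:min} the paper fixes an arbitrary enumeration $(s_1,t_1),\dots,(s_n,t_n)$ of the suffix types and keeps, for each left side, the rule of least index, so that the winning sets are just $L_i'=L_i\setminus\bigcup_{j<i}L_j$; your shortlex tie-break is correct (the comparison of $w_2mt_1$ with $w_2t_2$ does collapse to the type data once the common prefix $w_2$ is stripped), but it is the one place where your argument carries a genuine extra burden that the paper's arbitrary-order trick avoids.
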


\begin{proof}
Let $R$ be a \bcpr\ over $B$ for $G$, and let
$\pi:B^* \ra G$ be the associated surjective monoid homomorphism. 
For each
element $b \in B$, let the symbol $b^{-1}$ denote
another letter, and let $A:=B \cup \{b^{-1} \mid b \in B\}$.
%
%
For each $b \in B$, let $z_b$ denote the unique word in
Irr($R$) representing the element $\pi(b^{-1})$ of $G$.

Let 
\[
R' :=
\{(yb,v)  \mid (yb,v) \in R, y \in \text{Irr}(R), b \in B
\};
\]
i.e., the set of all rules of $R$ whose left entry has every
proper prefix irreducible; i.e., that satisfies property (b)
of Definition~\ref{def:min}.

Let $k$ be the constant associated to the bounded
property of the \prs\ $R$.  Then by expressing the finite set 
\[
W := 
\{(s,t) \in B^{\le k} \times B^{\le k} \mid
\text{the first letters of }s \text{ and }t \text{ are distinct}\},
\]
as $W=\{(s_1,t_1),...,(s_n,t_n)\}$, we can write
each element $r:=(u,v)$ of $R$ 
in the form
$r=(ws_{i(r)},wt_{i(r)})$ for a unique index $i(r) \in \{1,...,n\}$
and word $w \in B^*$. 
For each $1 \le i \le n$, let 
\[R_i' := \{r \in R' \mid i(r)=i
\}.
\]
Then $R'$ is the disjoint union $R' = \cup_{i=1}^n R_{i}'$.
Note that if there are two pairs $r_1=(u,v_1),r_2=(u,v_2) \in R'$
that have the same left hand entry but $v_1 \neq v_2$ on the right, 
then the indices $i(r_1) \neq i(r_2)$ must also be distinct.
Let
\[
Q' :=
\{r=(u,v) \in R' \mid \not\exists \tilde r = (u,\tilde v) \in R' \text{ with } 
      i(\tilde r) < i(r)\}.
\]
That is, the subset $Q'$ of $R$ satisfies properties
(b) and (c) of Definition~\ref{def:min}. 
We then define the \prs\  $Q:=Q' \cup Q''$ where
\[
Q'' :=
\{(yb^{-1},yz_b) \mid y \in \text{Irr}(R) \setminus B^*b\} 
\cup
\{(ybb^{-1},y) \mid yb \in \text{Irr}(R), b \in B\}. 
\]

Suppose that $w \in A^*$ is rewritten 
by a sequence of applications of rewriting operations
using the \prs\ $Q$.  Since 
the only occurrences of letters of $B^{-1}$ in $Q$
appear in left hand sides of pairs in $Q''$,
at most $l(w)$ of the rewritings in this sequence 
involve a rule of $Q''$.
The rules in $Q'$ all lie in the \cpr\ $R$,
which satisfies the termination property, and
so only finite sequences of applications of $Q'$ rules
can occur.  Hence there can be at most
finitely many rewritings in any such rewriting of $w$; that is,
the \prs\ $Q$ is terminating.

Suppose that $w$ is any word in Irr($R$).  Then $w \in B^*$,
so $w$ can't be rewritten using a pair from $Q''$,
and since $Q' \subseteq R$, the word $w$ also can't be 
reduced using $Q'$.  Hence Irr($R$) $\subseteq$ Irr($Q$).

On the other hand, suppose that $x$ is a word in 
Irr($Q$) $\setminus$ Irr($R$).
If $x \in B^*$, then write  $x=x'bx''$
where $x' \in B^*$, $b \in B$, and $x'b$ is the
shortest prefix of $x$ that does not lie in Irr($R$).
But then there must be a pair $(u,v) \in R$ and a word
$z \in B^*$ such that $x'b=uz$.  Since $x'$ is
irreducible over $R$, then $x'b=u$ and $z=\ew$, and the rule
$(u,v)$ also lies in the subset $R'$ of $R$.
Hence there also is a rule $(u,v')$ in the subset $Q'$
of $Q$, since the sets of left hand sides of rules
of $R'$ and $Q'$ are the same.
But then $x$ is reducible over $Q$.
This contradiction implies that 
$B^* \cap$ Irr$(Q) \subseteq$ Irr($R$), and so
we must have
 $x \notin B^*$.  In this case we can write
$x = x'ax''$ with $a \in B^{-1}$ and $x' \in B^*$,
where $a$ is the first occurrence of a letter of $A \setminus B$
in $x$.
Since the set of irreducible words over a \prs\ is
prefix-closed, the word $x'$ lies in Irr($Q$) $\cap B^*$,
and hence also in Irr($R$).  But then the word $x$ can
be reduced using an element of $Q''$, another 
contradiction.  Therefore we have Irr($R$) = Irr($Q$).  
Since the set Irr($R$) is a set of normal
forms for the group $G$, and whenever $(u,v) \in Q$ we
have $u=_Gv$,
this shows that $Q$ is a \cpr\  for the group $G$.

Since the \cpr\ $R$ is bounded with constant $k$,
the rules of the \prs\ $Q$ are also bounded, with 
constant given by the maximum of $k$, $2$, and 
$\max\{l(z_b) \mid b \in B\}$.  

Note that the \prs\ $Q$ has been chosen
to satisfy properties 
(a), (b), and (c) of
the Definition~\ref{def:min}, and so $Q$ is a \pro\ \bcpr.

If moreover the set $R$ is also \sr, then
the padded extension set $\pad(R)=\{\pad(u,v) \mid (u,v) \in R\}$
is a regular language over the alphabet
$B_2=(B \cup \$)^2 \setminus \{(\$,\$)\}$.
Define the monoid homomorphism
$\rho_1:B_2^* \ra B^*$ by $\rho((b_1,b_2)):=b_1$
if $b_1 \in B$ and $\rho((b_1,b_2)):=\ew$ if
$b_1=\$$.  The set Irr($R$) is the language
Irr($R$) = $A^* \setminus (\rho_1(\pad(R))A^*)$;
using closure of regular languages under
homomorphic image, concatenation, and complement
(see Section~\ref{sub:reg} for more on
regular languages),
then Irr($R$) is a regular set over the alphabet $B$.
But then Irr($R$) is also regular over any alphabet
containing $B$, including $A$.

For each $b \in B$,
Lemma~\ref{lem:peel} says that the set
$L_b:=\{y \mid yb \in $Irr$(R)\}$ also is
regular.
Also recall from Lemma~\ref{lem:diagonal} that whenever
$L$ is a regular language over $B$, then the diagonal
set $\Delta(L):=\{\pad(y,y) \mid y \in L\}$ is a 
regular language over $B_2$.
Now the padded extension of the subset $Q''$ of
the \prs\ $Q$ has the decomposition
\[
\pad(Q'') =
\cup_{b \in B} [
(\Delta(\text{Irr}(R) \setminus B^*b) \cdot \pad(b^{-1},z_b))
\cup
(\Delta(L_b) \cdot \pad(bb^{-1},\ew))   ].
\]
Again applying closure properties (in particular
under finite unions) of regular
languages, this shows that $Q''$ is \sr.

Analyzing the subset $Q'$ of $Q$ requires a
few more steps.  First we note that the padded extension
of the set of rules in $R$ satisfying property (b)
in Definition~\ref{def:min} is
$\pad(R') = \pad(R) \cap \rho_1^{-1}($Irr$(R) \cdot B)$,
and so $\pad(R')$ is a regular set.

Next for each $1 \le i \le n$ (where $n=|W|$),
let $L_i:=\rho_1(\pad(R') \cap (\Delta(B^*)\cdot \pad(s_i,t_i)))$
be the set of left hand entries of all of the
rules $r$ in $R_i'$.  
Again closure properties show that $L_i$ is a regular
language over $B$.  
Then the set 
\[
L_i' := L_i \setminus (\cup_{j=1}^{i-1} L_j)
\]
is the set of all left hand entries of elements
$q$ in $Q'$ such that the index $i(q)=i$.
Now Lemma~\ref{lem:peel} shows that the set
$L_i'':=\{y \mid ys_i \in L_i'\}$ is regular.
Putting all of these together, the padded
extension of the set $Q'$ has the decomposition
\[
\pad(Q')=\cup_{i=1}^n \Delta(L_i'') \cdot \pad(s_i,t_i).
\]
Thus $\pad(Q')$ is a regular language over the
alphabet $B_2$, and hence also over the set
$A_2$.  
Hence $Q'$ also is \sr.

Finally the closure of \sr\ sets under finite
unions shows that the \bcpr\ $Q$ is \sr, as required.
\end{proof}

Note that whenever $R$ is a
 \pro\ \cpr\  over an alphabet $A$
and $w \in A^*$ is a reducible word, then
there exists exactly one rewriting operation
(of the form $w=ux \ra vx$ for some $(u,v) \in R$)
that can be applied to $w$.  Hence
for each $w \in A^*$, we can define the
{\em prefix-rewriting length} $prl_R(w)$ to
be the number of rewriting operations required
to rewrite $w$ to its normal form via $R$.

\begin{theorem}\label{thm:prs}
Let $G$ be a finitely generated group. \\
(1) The group $G$ 
is stackable if and only if $G$ admits a bounded
\cpr.\\
(2) The group $G$ 
is autostackable if and only if $G$ admits a \sr\ bounded
\cpr.
\end{theorem}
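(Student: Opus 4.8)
The plan is to prove both directions of each equivalence by translating between the three equivalent viewpoints already developed: stackable structures $(\nf,\stf)$, flow functions on the Cayley graph (Proposition~\ref{prop:asflow}), and prefix-rewriting systems. The backward directions are essentially already in hand: Lemma~\ref{lem:asimpliescpr} shows that a stackable structure $(\nf,\stf)$ yields the \cpr\ $R_\stf$, and boundedness of $\stf$ (length at most $k$) transfers directly to boundedness of $R_\stf$, since every rule of $R_\stf$ has the form $(ya,y\stf(y,a))$ with common prefix $y$ and short tails $a$, $\stf(y,a)$, or the free-reduction form $(yaa^{-1},y)$. For part (2), the hypothesis that $graph(\stf)$ is \sr\ must be shown to make $R_\stf$ \sr; I would argue this using Lemma~\ref{lem:peel}, Lemma~\ref{lem:diagonal}, and the closure properties of \sr\ languages under finite unions, writing $\pad(R_\stf)$ as an explicit finite union of images of $graph(\stf)$ under coordinate-shuffling homomorphisms together with the \sr\ free-reduction piece. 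So for the ``if'' directions I build a stackable (resp.\ autostackable) structure by reading $\nf:=$ Irr$(R)$ off the rewriting system.

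First I would address the forward (``only if'') directions, which are the substantive content. Given a bounded \cpr\ $R$, I invoke Proposition~\ref{prop:min} to replace it by a \pro\ \bcpr\ $Q$ over $A=B\cup B^{-1}$ with Irr$(R)=$ Irr$(Q)$; in the \sr\ case Proposition~\ref{prop:min} additionally gives that $Q$ is \sr. I then set $\nf:=$ Irr$(Q)$, which is prefix-closed and, being the irreducible set of a convergent system, is a genuine normal form set containing $\ew$. The stacking function is defined edgewise: for $g\in G$ and $a\in A$, if $\ega$ is degenerate I set $\stf(y_g,a):=a$; if $\ega$ is recursive then $y_ga\notin\nf$, so by the \pro\ property exactly one rule $(y_ga,v)\in Q$ applies to the reducible word $y_ga$ (using that every proper prefix of $y_ga$, in particular $y_g$, is irreducible), and I set $\stf(y_g,a):=y_g^{-1}v$ — or more directly the word obtained by one prefix-rewriting step followed by the word $v$. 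Boundedness of $Q$ forces $\stf$ to have bounded length, and each rule relates words representing the same group element, so $\stf(y_g,a)=_G a$.

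The main obstacle is verifying the well-foundedness condition (2r): that the transitive closure $<_\stf$ of ``$e'$ lies on the path $\stf(y_g,a)$'' is a strict well-founded partial order on recursive edges. This is exactly where I would use the \emph{termination} of $Q$ via the prefix-rewriting length $prl_Q$ defined just before the theorem. The key claim is that applying $\stf$ to a recursive edge $\ega$ corresponds to a single prefix-rewriting step on $y_ga$, and each recursive edge $e'$ appearing strictly inside the path $\stf(y_g,a)$ has initial vertex whose normal form has been advanced toward irreducibility — so I would show $e'<\ega$ implies $prl_Q$ of the relevant word strictly decreases, exactly as the length-of-normal-form argument functions in the proof of Theorem~\ref{thm:asa}. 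Since $prl_Q$ takes values in $\N$ and strictly decreases along $<$, its transitive closure is well-founded; confirming that the recursive edges on the interior of each $\stf$-path genuinely have smaller $prl_Q$ (rather than merely the terminal edge being degenerate) is the delicate point, and I would handle it by observing that the path $\stf(y_g,a)$ rewrites $y_ga$ one step closer to $\nf$, so every vertex strictly interior to it is reached by a word of smaller prefix-rewriting length than $y_ga$. This closes property (2r), yields stackability, and combined with the \sr\ transfer above yields autostackability, completing both equivalences.
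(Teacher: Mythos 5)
Your overall architecture coincides with the paper's: the backward directions rest on Lemma~\ref{lem:asimpliescpr} together with a decomposition of $\pad(R_\stf)$ into \sr\ pieces via Lemmas~\ref{lem:peel}, \ref{lem:diagonal}, and \ref{lem:product}, and the forward directions pass through Proposition~\ref{prop:min}, take $\nf:=$ Irr$(Q)$, and establish property (2r) by a prefix-rewriting-length argument. There is, however, one genuine gap, and it sits exactly where the \emph{bounded} hypothesis on the \cpr\ has to be consumed. You define $\stf(y_g,a):=y_g^{-1}v$, where $(y_ga,v)$ is the unique rule of $Q$ applying to $y_ga$, and then assert that boundedness of $Q$ forces $\stf$ to have bounded length. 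For that definition the assertion is false: the word $y_g^{-1}v$ has length $l(y_g)+l(v)$, which is unbounded as $g$ ranges over $G$, so condition (2) of Definition~\ref{def:as} (that $\stf(y_g,a)$ have length at most some fixed $k$) fails outright, and no stackable structure is produced. The role of boundedness is precisely to let you cancel the long common prefix: write the rule as $(wsa,wt)$ with $l(s),l(t)\le k$ and $s,t$ beginning with distinct letters, so that $y_g=ws$, and set $\stf(y_g,a):=s^{-1}t$. This word has length at most $2k$ and still represents $a$, since $wsa=_Gwt$ gives $a=_Gs^{-1}w^{-1}wt=s^{-1}t$. As written, your forward direction never actually uses the bounded hypothesis correctly, so the proof does not go through without this repair.

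Once that is fixed the remainder of your plan matches the paper. Your identified ``delicate point'' for (2r) is handled there by two observations: the subpath of $\stf(y_g,a)=s^{-1}t$ labeled $s^{-1}$ traverses only degenerate (tree) edges because $y_g=ws$ is irreducible and Irr$(Q)$ is prefix-closed; and, writing $t=b_1\cdots b_n$, the \pro\ property forces the rewriting of $y_ga=wsa$ to proceed through $wb_1\cdots b_n$ and to reduce everything to the left of $b_i$ to its normal form before any rule can touch $b_i$, whence the prefix-rewriting length of the word attached to each recursive edge on the path is strictly smaller than $prl_Q(y_ga)$. That gives well-foundedness via the order on $\N$, exactly as you propose. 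Your sketches of the \sr\ transfers in both directions are consistent with the paper's explicit decompositions of $\pad(R_\stf)$ and of $graph(\stf)$ into finite unions of products of regular languages.
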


\begin{proof}
Suppose first that the group $G$ is stackable over 
an \sym\ generating set $A$,
with normal form set $\nf$,
constant $k$, and  \wstf\ $\stf:\nf \times A \ra A^*$
such that the length of $\stf(y,a)$ is at most $k$ for all
$(y,a) \in \nf \times A$.
In Lemma~\ref{lem:asimpliescpr}, we show that
\begin{align*}
R_\stf :=
& \{(ya,y\stf(y,a))  \mid y \in \nf, a \in A, ya \notin \nf \cup A^*a^{-1}a\} \\
& \cup
\{(yaa^{-1},y) \mid ya \in \nf, a \in A\}. 
\end{align*}
is a \cpr\ for the group $G$.  
(Moreover, the irreducible words are the normal forms
from the stackable structure; i.e., Irr$(R_\stf)=\nf$.)
The bound $k$
on lengths of words in the image of $\stf$ implies 
that 
$R_\stf$ is a \bcpr.

If moreover $G$ is autostackable, so that the
set $graph(\stf)$ is \sr, let 
$
\pad(graph(\stf)):=\{\pad(y_g,a,\stf(y_g,a))  \mid g \in G, a \in A\}
$
be the regular language of padded words over 
$A_3=(A \cup \$)^3 \setminus \{(\$,\$,\$)\}$
associated to the elements of the set $graph(\stf)$.
Define the monoid homomorphism 
$\rho_1:A_3^* \ra A^*$
by $\rho_1((a_1,a_2,a_3)):=a_1$ if $a_1 \in A$
and $\rho_1((a_1,a_2,a_3)):=\ew$ if $a_1=\$$, 
and the monoid homomorphism
$\rho_{2,3}:A_3^* \ra ((A \cup \$)^2)^*$ by
$\rho_{2,3}((a_1,a_2,a_3)):=(a_2,a_3)$,
for each
$(a_1,a_2,a_3) \in A_3$.
The normal form set Irr$(R_\stf)=\nf=\rho_1(\pad(graph(\stf)))$ is
the image of a regular set, and so 
is regular.  For each $a \in A$, the set 
$J_a:=\{y \in A^* \mid ya \in \nf\}$ is regular,
applying Lemma~\ref{lem:peel}.
Also with this notation, for each $a \in A$ and $u \in A^{\le k}$
we can write the set $L_{a,u}$ of all
normal form words $y \in \nf$ such that the 
\wstf\ $\stf$ maps $(y,a)$ to the word $u$ as
\[
L_{a,u} := \rho_1(\pad(graph(\stf)) \cap \rho_{2,3}^{-1}(\pad(a,u)\cdot (\$,\$)^*)).
\]
Recalling the fact that the class of 
regular languages is closed under finite intersections
and homomorphic image and preimage,
then since the language $\pad(a,u)\cdot (\$,\$)^*$ over
$(A \cup \$)^2$ is regular, the set $L_{a,u}$ is regular.
Using the notation $\Delta(L)=\{\pad(w,w) \mid w \in L\}$
for any language $L$, we can now decompose the padded
extension of the \prs\ as
\[
R_\stf = 
[\cup_{a \in A,u \in A^{\le k},a \neq u}
    \Delta(L_{a,u}) \cdot \pad(a,u)] 
\cup
[\cup_{a \in A} \Delta(J_a) \cdot \pad(aa^{-1},\$)].
\]
From Lemma~\ref{lem:diagonal},
the languages $\Delta(L_{a,u})$ and $\Delta(J_a)$
over $(A \cup \$)^2$
are regular.  
Since singleton sets are regular, 
and the class of regular languages is
also closed under concatenation  and
finite unions, this decomposition shows that the set $\pad(graph(\stf))$ 
is regular.  Therefore $R_\stf$ is a \sr\ \bcpr\ for the
autostackable group $G$.

Conversely, suppose that the group $G$ admits a \bcpr.
From the proof of Proposition~\ref{prop:min}, there exists a 
\pro\ \bcpr\ $R$, over an \sym\ alphabet $A$,
for the group $G$.  Let $k$ be the constant associated
to the bounded property of this \prs.
Let $\nf$ be 
the set Irr($R$) of words that are irreducible with respect to
the rewriting operations 
$ux \ra vx$ whenever $(u,v) \in R$ and $x \in A^*$.
Since the \prs\ is convergent, then $\nf$ is a set of 
normal forms for $G$.
Note that the empty word and
any prefix of an irreducible word are irreducible,
and so $\nf$ is a prefix-closed language of normal forms
for $G$ over $A$ that contains the empty word.

Define the function $\stf:\nf \times A \ra A^*$
as follows.  For each $y \in \nf$ and $a \in A$, define
$\stf(y,a):=a$ if either $ya \in \nf$ or $y \in A^*a^{-1}$,
as required for
property (2d) of Definition~\ref{def:as}.
If neither of these conditions hold, then the word 
$ya$ is reducible.  Since the maximal prefix $y$
is irreducible, any rule of the \prs\ that applies
to the word $ya$ must have the entire word $ya$
as its left entry.
Because this \prs\ is processed, there is exactly one 
element of $R$ of the form $(ya,v)$ for some $v \in A^*$.
Moreover, there are words $s,t \in A^{\le k}$ and $w \in X^*$
such that $ya=wsa$,  $v=wt$, and (by taking $w$ to be
as long as possible) the words $s$ and $t$ do not start with
the same letter.  In this case we define
$\stf(y,a):=s^{-1}t$, where $s^{-1}$ is the formal inverse
of $s$ in $A^*$.
For every $y \in \nf$ and $a \in A$, then, the length 
of the word $\stf(y,a)$ is at most $2k$, and 
since $wsa=_Gwt$ in the rewriting presentation of $G$, 
we have $\stf(y,a)=_Ga$.  

Let $\ga$ be the Cayley graph for the group
$G$ with generating set $A$, and let 
$\ves=\vec E_{\nf,r}$
denote the
set of 
recursive
edges with respect to the normal form set $\nf$.
Given any directed edge $\ega$ of the Cayley
graph $\ga(G,A)$ with $g \in G$ and $a \in A$, 
let $prl_R(\ega):=prl(y_ga)$ denote the
prefix-rewriting length over $R$ of the associated 
word $y_ga$, where $y_g$ is the irreducible normal
form for $g$.

Suppose that $\ega$ is any edge in $\ves$,
and that $e'$ is an edge on the directed
path in $\ga$ labeled by the word $\stf(y_g,a)$
and starting at the vertex $g$.  Then the word
$y_ga$ is not in normal form, and there is a rule
$y_ga=wsa \ra wt$ in the \prs\ $R$ such that 
$\stf(y_g,a)=s^{-1}t$.
Since the word $y_g$ is in normal form,
the prefix $s^{-1}$ of the word 
$\stf(y_g,a)$
labels a path in $\ga$ starting at
the vertex $g$ that follows only
degenerate edges, in the maximal tree
defined by the normal form set $\nf$.
Writing the word $t=b_1 \cdots b_n$
with each $b_i$ in $A$, then
$e'=e_{gs^{-1}b_1 \cdots b_{i-1},b_i}=
   e_{wb_1 \cdots b_{i-1},b_i}$ 
for some $i$.
Now the sequence of rewriting operations
with respect to the \prs\ $R$ of the 
word $y_ga$ has the form
$y_ga =wsa \ra wt=wb_1 \cdots b_n
\ra^* y_{gs^{-1}b_1 \cdots b_{i-1}}b_i\cdots b_n
\ra^* y_{ga}$,
where $\ra^*$ denotes a finite number (possibly 0)
of applications of rewriting rules, since 
no rewriting operation over the \pro\ \prs\ $R$ 
can be applied affecting
the letter $b_i$ in these words until the prefix
to the left of that letter has been rewritten into 
its irreducible normal form.
Hence the number of rewritings needed
to obtain an irreducible word starting from the word 
$y_ga$ is strictly greater than the number 
required to obtain a normal form starting from
the word $y_{gs^{-1}b_1 \cdots b_{i-1}}b_i$.
That is,
$prl_R(e')<prl_R(e)$.  Then the
usual strict well-founded partial ordering
on the natural numbers implies that the
relation $<_\stf$ of property (2r) in 
Definition~\ref{def:as} is a strict well-founded
partial ordering.  Hence property (2)
of the Definition~\ref{def:as} of autostackable holds,
and so the group $G$ is stackable.

If moreover $G$ has a \bcpr\ that is \sr, 
then Proposition~\ref{prop:min} says that 
there is a \pro\ \sr\ \bcpr\ $R$ over a \sym\ 
generating set $A$ for
the group $G$.
Synchronous regularity of $R$ means that
the set 
$\pad(R)=\{\pad(u,v) \mid (u,v) \in R\}$ of padded words
is a regular language over the set 
$A_2=(A \cup \$)^2 \setminus \{(\$,\$)\}$.
Let $\rho_1:A_2^* \ra A^*$
be the monoid homomorphism defined by 
$\rho_1(a_1,a_2):=a_1$ if $a_1 \in A$ and $\rho_1(a_1,a_2):=\ew$
if $a_1=\$$.
The set $\nf$ of irreducible words with respect to $R$
can then be written as
\[
\nf = A^* \setminus (\rho_1(\pad(R)) A^*),
\]
and so $\nf$ is a regular language. 

For each $a \in A$, an application of Lemma~\ref{lem:peel}
shows that the language
$L_a:= \{y \mid ya \in \nf\}$ is regular.
Lemma~\ref{lem:product} then shows that 
the languages $L_a \times \{a\} \times \{a\}$
and $(\nf \cap A^*a^{-1}) \times \{a\} \times \{a\}$
are \sr.  Thus the subset of $graph(\stf)$
corresponding to the application of $\stf$ to
degenerate edges is \sr.

Given $a \in A$, let $W_a$ be the finite set of all
pairs $(s,t)$ such that $s,t \in A^{\le k}$,
$s$ and $t$ begin with different letters of $A$,
and $s$ does not end with the letter $a^{-1}$. 
Let $\Delta(A^*):=\{(w,w) \mid w \in A^*\}$;
by Lemma~\ref{lem:diagonal}, this language over
$A_2$ is regular.  
For each $(s,t) \in W_a$, let 
\[
P_{a,s,t}:=\rho_1(\pad(R) \cap (\Delta(A^*)\cdot \pad(sa,t))),
\]
which is again regular using the closure properties of
regular languages.
Then the set 
of all words
$w$ such that the rule $(wsa,wt)$ lies in $R$ is
\[
L_{a,s,t}:= \{w \mid wsa \in P_{a,s,t}\},
\]
which is also regular (by Lemma~\ref{lem:peel}).
Applying Lemma~\ref{lem:product} once more
shows that the subset $(L_{a,s,t} \cdot s) \times \{a\} \times \{s^{-1}t\}$
of $graph(\stf)$ corresponding to these recursive
edges is also \sr.

We can now write the  graph of the \wstf\ $\stf$ as
\begin{align*}
graph(\stf) = 
\cup_{a \in A} & [(L_a \times \{a\} \times \{a\}) \cup
((\nf \cap A^*a^{-1}) \times \{a\} \times \{a\})] \\
& \cup_{a \in A, (s,t)\in W_a} (L_{a,s,t} \cdot s) \times \{a\} \times \{s^{-1}t\}.
\end{align*}
Closure of the class of \sr\ languages under
finite unions then implies that $graph(\stf)$ is \sr.
Hence property (1) of Definition~\ref{def:as}
of autostackability also holds in this case.
\end{proof}

Rewriting systems that are not ``prefix-sensitive'', allowing 
rewriting rules to be applied anywhere in a word, have 
been considerably more widely studied and applied in the
literature than {\prs}s.
A {\em \fcr} for a group $G$ consists of a finite 
set $A$
together with a finite subset $R \subseteq A^* \times A^*$
such that 
as a monoid, $G$ is presented by 
$G = Mon\langle A \mid u=v$ whenever $u \ra v \in R \rangle,$
and the rewritings
$xuz \ra xvz$ for all $x,z \in A^*$ and $(u,v)$ in $R$ satisfy:
\begin{itemize}
\item {\em Normal forms:} Each $g \in G$ is 
represented by exactly one {\em irreducible} word 
(i.e. word that cannot be rewritten)
over $A$.
\item {\em Termination:}
There does not exist an infinite sequence of
rewritings $x \ra x_1 \ra x_2 \ra \cdots$.
\end{itemize}

The key difference here is that a rewriting
system allows rewritings $xuz \ra xvz$ for all $x,z \in A^*$
and $(u,v) \in R$, but a
\prs\ only allows rewritings $uz \ra vz$ for all $z \in A^*$
and $(u,v) \in R$.  However, every \fcr\ gives rise
to a \bcpr, yielding the following.

\begin{corollary}\label{cor:fcrs} 
Every group that admits a finite complete rewriting system
is autostackable.
\end{corollary}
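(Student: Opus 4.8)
The plan is to invoke Theorem~\ref{thm:prs}(2): it suffices to manufacture, from a \fcr, a \sr\ bounded \cpr\ for the same group. Let $R \subseteq A^* \times A^*$ be a \fcr\ for $G$ over the monoid generating set $A$, with irreducible words forming the normal form set $\nf := \text{Irr}(R)$. The rules of $R$ may be applied in the interior of a word, whereas a \prs\ may only rewrite prefixes, so the first task is to ``prefix-ify'' $R$. I would set
\[
R_p := \{(wu,wv) \mid (u,v) \in R,\ w \in \text{Irr}(R)\},
\]
letting the common prefix $w$ range over all irreducible words and $(u,v)$ over the finitely many rules of $R$.

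Next I would check that $R_p$ is a bounded \cpr\ with $\text{Irr}(R_p) = \text{Irr}(R)$. Every $R_p$-rewriting $wuz \to wvz$ is literally the application of $(u,v) \in R$ at the interior position $l(w)$, hence a single $R$-rewriting; so any infinite $R_p$-sequence would be an infinite $R$-sequence, and termination of $R_p$ follows from termination of $R$. For the irreducibles: if $x \in \text{Irr}(R)$ then $x$ has no factor equal to a left-hand side, in particular no prefix of the form $wu$, so $x$ is $R_p$-irreducible; conversely, if $x$ is $R$-reducible, let $wu$ be the prefix of $x$ ending at the leftmost occurrence of a left-hand side $u$, so that $w$, having no left-hand side as a factor, lies in $\text{Irr}(R)$, and the rule $(wu,wv) \in R_p$ applies to $x$. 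Thus $\text{Irr}(R_p) = \text{Irr}(R) = \nf$ is a set of normal forms for $G$; since each relation $wu =_G wv$ holds and $A$ generates $G$ as a monoid, $R_p$ is a \cpr\ for $G$. It is bounded because each rule $(wu,wv)$ has common prefix $w$ with tails $u$ and $v$, whose lengths are at most the constant $k$ bounding the lengths of the (finitely many) left- and right-hand sides of $R$.

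Finally I would verify synchronous regularity and conclude. Since $R$ is finite, $\nf = \text{Irr}(R) = A^* \setminus (A^* U A^*)$, where $U$ is the finite set of left-hand sides, is regular by the closure properties of Section~\ref{sub:reg}. Because the two coordinates of each pair in $R_p$ share the prefix $w$, the padded extension decomposes as
\[
\pad(R_p) = \bigcup_{(u,v) \in R} \Delta(\text{Irr}(R)) \cdot \pad(u,v),
\]
where $\Delta(L) = \{\pad(w,w) \mid w \in L\}$. By Lemma~\ref{lem:diagonal} the set $\Delta(\text{Irr}(R))$ is regular, concatenating with each fixed word $\pad(u,v)$ preserves regularity, and the union is finite, so $\pad(R_p)$ is regular; that is, $R_p$ is \sr. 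Hence $R_p$ is a \sr\ bounded \cpr, and Theorem~\ref{thm:prs}(2) yields that $G$ is \as. (The generating set $A$ need not be inverse-closed, but this is absorbed by the converse direction of Theorem~\ref{thm:prs}, which passes through Proposition~\ref{prop:min} to replace $A$ by $A \cup A^{-1}$.) I expect no serious obstacle here: the one point genuinely requiring an argument is the leftmost-occurrence observation guaranteeing that prefix rewriting alone reduces every non-normal word, while everything else is termination-by-containment and routine regular-language bookkeeping.
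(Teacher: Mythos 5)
Your proposal is correct and follows essentially the same route as the paper: prefix-ify the rewriting system by prepending prefixes $w$, observe the resulting {\prs} is a bounded {\cpr}, decompose its padded extension as a finite union of sets of the form $\Delta(L)\cdot\pad(u,v)$, and invoke Theorem~\ref{thm:prs}(2). The only difference is that you let $w$ range over $\mathrm{Irr}(R)$ where the paper uses all of $A^*$ (so that $\hat R$ permits exactly the same rewritings as $R$ and convergence is immediate); your restriction to leftmost rewriting is also fine but costs you the extra leftmost-occurrence argument, which you supply correctly.
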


\begin{proof}
Given a \fcr\ $R$ for a group $G$ over a
generating set $A$, 
the \prs\ over $A$ defined by
\[
\hat R := \{(wu,wv) \mid (u,v) \in R, w \in A^*\}
\]
allows exactly the same rewriting operations as
the original \fcr, and therefore is a \cpr.
Since the set $R$ is finite, this \prs\ $\hat R$
is also bounded.  Finally, the padded extension of
the set $\hat R$
can be written as 
$\pad(\hat R) = \cup_{(u,v) \in R} \Delta(A^*) \cdot \pad(u,v)$,
and so this set is \sr.
Theorem~\ref{thm:prs}(2) now completes the proof.
\end{proof}

\section*{Acknowledgments}

This work was partially supported by 
a grant from the Simons Foundation (Grant
Number 245625 to the second author).




\end{document}